\newtheorem{theo}{Theorem}[section]
\newtheorem{coro}[theo]{Corollary}
\newtheorem{prop}[theo]{Proposition}
\newtheorem{lemm}[theo]{Lemma}
\theoremstyle{definition}
\theoremstyle{remark}
\newtheorem{rema}[theo]{Remark}
\newcommand{\bb}[1]{\mathbb{#1}}
\newcommand{\al}[1]{\mathcal{#1}}
\newcommand{\sr}[1]{\mathscr{#1}}
\newcommand{\ak}[1]{\mathfrak{#1}}
\newcommand{\rk}{{\rm rk}}
\newcommand{\ra}{\rightarrow}
\newcommand{\lra}{\longrightarrow}
\newcommand{\x}[1]{\text{#1}}
\begin{document}
\title{Strange duality at level one for alternating vector bundles}
\author{Hacen ZELACI}
\address{Mathematical departement. University of EL Oued.}
\curraddr{}
\email{z.hacen@gmail.com}

\begin{abstract}
In this paper, we show a strange duality isomorphism at level one for the space of generalized theta functions on the moduli spaces of alternating anti-invariant vector bundles in the ramified case. These anti-invariant vector bundles constitute one of the non-trivial examples of  parahoric $\al G-$torsors, where $\al G$ is a twisted (not generically split) parahoric group scheme.
\end{abstract}
\maketitle
\tableofcontents

\section{Introduction}
In their seminal paper (\cite{BNR}), Beauville, Narasimhan, and Ramanan established a duality isomorphism between the space of level one generalized theta functions on the moduli space of rank $r$ semistable vector bundles with trivial determinant over a smooth curve and the dual of the space of global sections of the $r$-th multiple of the Riemannian theta divisor on the Jacobian variety of the curve.  More precisly, let $\al{SU}_X(r)$ be the moduli space of rank $r$ vector bundles of trivial determinant over $X$, and denote by $\sr D$ the determinant of cohomology line bundle on $\al{SU}_X(r)$. Let $J_X$ the Jacobian variety on $X$ and $\Theta$ be a Riemann theta divisor on $J_X$. Then BNR showed that there exists a canonical isomorphism $$H^0(\al{SU}_X(r),\sr D)\cong H^0(J_X,\al O(r\Theta))^*.$$ This duality is known as the BNR duality, and it was the first case of the so-called strange duality isomorphism.

Let  $\pi:X\ra Y$ be a ramified double cover of smooth curves  and let $r\geqslant 1$ be an integer. Denote by $\sigma$ the corresponding involution on $X$ and by $R$ the ramification divisor. 
A vector bundle $E$ on $X$ is called anti-invariant if there exists an isomorphism $$\psi:\sigma^* E\ra E^*.$$ If $\sigma^*\psi=-\psi^t$, we say that $E$ is   $\sigma-$alternating. \\

The moduli space of anti-invariant vector bundles stands out as a noteworthy instance of the moduli space of parahoric $\al G$-torsors, where $\al G$ represents a parahoric group scheme over $X$, and notably, it is not generically split in this case. This underscores the significance of the study of these moduli spaces. Over the past decade, parahoric torsors have become a central focus of considerable interest among researchers, since they can be viewed as a natural generalization encompassing various other concepts, including $G$-bundles, parabolic bundles, and invariant $G$-bundles. 

The moduli spaces of anti-invariant vector bundles can also be seen as a generalization of Prym varieties to higher rank. Indeed, the action of $\sigma$ lifts to an action on the moduli space of vector bundles, and as in the case of Jacobian, one can look at the invariant and anti-invariant parts, the later one is precisely the moduli space of anti-invariant vector bundles.\\

In this paper, we prove a BNR duality for the moduli space of $\sigma-$alternating vector bundles in the ramified case.\\

The moduli spaces of semistable pairs $(E,\psi)$ of $\sigma-$alternating vector bundles of rank  $2r$, which is denoted  $\al{U}_X^{\sigma,-}(2r)$,  has been constructed in \cite{Z2}, it has two connected components $\al{U}_{X,0}^{\sigma,-}(2r)$ and $\al{U}_{X,1}^{\sigma,-}(2r)$. The case of trivial determinant is denoted $\al{SU}_X^{\sigma,-}(2r)$. To each pair $(E,\psi)$ with $\det E=\al O_X$ and $\det \psi=1$, we associate a topological invariant  $\tau$ (see Section\ref{2}).
We denote the associated moduli space by $\al{SU}_{X,\tau}^{\sigma,-}(2r)$.\\
There exists a Prym variety $\rm \tilde P$ over  the normalisation $\hat{X}_s$ of the  spectral curve  $\tilde X_s\ra X$ (more details will be given in section \ref{2}) such that the pushforward rational map $${\rm \tilde P}\ra \al U_{X,0}^{\sigma,-}(2r)$$ is dominant. Moreover, the Prym variety $\rm \tilde P$ is principally polarized. Let $\tilde{\sr L}$ be a line bundle associated to a principal polarization on $\rm \tilde P$. We can show that the codimension of the locus where this map is not defined is at least $2$. Hence we get an injection $H^0(\al U_{X,0}^{\sigma,-}(2r),\sr D)\hookrightarrow H^0({\rm\tilde P},\tilde{\sr L})$, where $\sr D$ is the determinant of cohomology line bundle on $\al U_X^{\sigma,-}(2r)$. 
 \begin{theo}
     The map $(q_*)^*:H^0(\al U_{X,0}^{\sigma,-}(2r),\sr D) \ra H^0({\rm \tilde P}, \tilde{\sr L})$ is an isomorphism. In particular, $$\dim H^0(\al U_{X,0}^{\sigma,-}(2r),\sr D)=1.$$
 \end{theo}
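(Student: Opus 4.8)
The plan is to reduce the statement to a single nonvanishing claim and then invoke the injectivity already in hand. Since $\tilde{\sr L}$ is a line bundle attached to a \emph{principal} polarization on the abelian variety $\tilde P$, it is represented by a unique effective theta divisor; equivalently, Riemann--Roch on $\tilde P$ gives $\chi(\tilde{\sr L})=1$, and the vanishing of the higher cohomology of an ample line bundle yields $\dim H^0(\tilde P,\tilde{\sr L})=1$. Combined with the injectivity of $(q_*)^*$ furnished by the codimension $\geqslant 2$ estimate on the indeterminacy locus of $q_*$, this immediately forces $\dim H^0(\al U_{X,0}^{\sigma,-}(2r),\sr D)\leqslant 1$. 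Hence it suffices to produce a single nonzero global section of $\sr D$: since $q_*$ is dominant, any nonzero section of $\sr D$ pulls back to a nonzero section of $\tilde{\sr L}$, and an injection of a nonzero space into a one-dimensional space is automatically an isomorphism.

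To exhibit a nonzero section I would use the canonical theta section of the determinant-of-cohomology line bundle. Fixing a reference bundle $\kappa$ on $X$ (a twist by the ramification data chosen so that the Euler characteristic $\chi(E\otimes\kappa)$ vanishes for $(E,\psi)$ in the component of interest), the bundle $\sr D$ is the determinant of cohomology of the universal family twisted by $\kappa$, and it carries a tautological section $\theta$ whose zero locus is the generalized theta divisor $\{\,(E,\psi): h^0(E\otimes\kappa)\neq 0\,\}$. This section is nonzero precisely when the theta divisor is a proper subvariety, i.e. when there is at least one semistable $\sigma$-alternating bundle with $h^0(E\otimes\kappa)=0$.

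The nonemptiness of this vanishing locus is where the spectral construction does the work. A generic point of $\tilde P$ is a line bundle $\ell$ on the normalized spectral curve $\hat X_s$, and its image $q_*(\ell)$ is a semistable $\sigma$-alternating bundle of the prescribed topological type. By the projection formula $h^0(X,q_*(\ell)\otimes\kappa)=h^0(\hat X_s,\ell\otimes\hat\kappa)$, where $\hat\kappa$ is the pullback of $\kappa$; since $\ell\otimes\hat\kappa$ then has Euler characteristic zero, the locus of effective such line bundles is a proper theta divisor in the corresponding Picard variety, and it remains to check that the relevant translate of $\tilde P$ is not contained in it. Granting this, the generic $\ell$ gives $h^0(q_*(\ell)\otimes\kappa)=0$, so $\theta$ is not identically zero and the desired nonzero section exists.

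The hard part will be precisely this last step: ensuring that for a generic member of the relevant component of $\tilde P$ the twisted pushforward really has no sections. This forces one to pin down the degree and parity of $\hat\kappa$ so that $\ell\otimes\hat\kappa$ lands in the component of $\mathrm{Pic}(\hat X_s)$ whose general element is nonspecial, and it is sensitive to the ramification divisor $R$, to the distinction between the two components $\al U_{X,0}^{\sigma,-}(2r)$ and $\al U_{X,1}^{\sigma,-}(2r)$, and to the invariant $\tau$. I would also need to confirm the identification $(q_*)^*\sr D\cong\tilde{\sr L}$ that is used implicitly throughout, so that the theta section of $\sr D$ and the unique theta function on $\tilde P$ correspond under pullback.
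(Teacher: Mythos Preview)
Your overall architecture is exactly the paper's: use the codimension~$\geqslant 2$ estimate to get injectivity, use that $\tilde{\sr L}$ is a principal polarization to get a one-dimensional target, and then reduce everything to exhibiting a single nonzero section of $\sr D$ (equivalently of its square root $\sr P_L$). Where you diverge is in how you produce that nonzero section.

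You propose to manufacture the nonzero section via the spectral picture: take a generic $\ell\in\tilde{\rm P}$, push forward, and use the projection formula to reduce $h^0(q_*\ell\otimes\kappa)=0$ to the statement that $\tilde{\rm P}$ is not contained in a suitable theta divisor on ${\rm Pic}(\hat X_s)$. This is correct in principle, and in fact once one grants the identification $(q_*)^*\sr P_L\cong\tilde{\sr L}$ it is almost tautological: the pulled-back theta section lives in the one-dimensional space $H^0(\tilde{\rm P},\tilde{\sr L})$ and its zero locus is the restricted theta divisor, so the only issue is the noncontainment $\tilde{\rm P}\not\subset\Theta$ that you flag as ``the hard part.'' The paper simply bypasses this. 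In Lemma~\ref{effective} it writes down, by hand, a polystable $\sigma$-alternating bundle
\[
E=(M\oplus\sigma^*M^{-1})^{\oplus r},\qquad \psi=\begin{pmatrix}0&1\\-1&0\end{pmatrix}^{\oplus r},
\]
with $M$ chosen so that $h^0(L\otimes M)=0$, and checks via Serre duality and $L\otimes\sigma^*L\cong K_X$ that $h^0(E\otimes L)=0$. This gives the nonzero section of $\sr P_L$ (hence of $\sr D$) without ever touching the spectral curve, the projection formula, or any delicate parity/component bookkeeping on $\hat X_s$.

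So your route is valid but strictly heavier: it reintroduces the spectral machinery for a step that the paper handles with a two-line direct-sum construction, and the genuine content you worry about at the end (matching degrees, landing in the right component, checking $\tilde{\rm P}\not\subset\Theta$) is precisely what the elementary construction is designed to avoid. What your approach buys is conceptual uniformity---everything happens on $\tilde{\rm P}$---at the cost of having to justify the noncontainment; what the paper's approach buys is that the nonvanishing becomes a self-contained lemma about line bundles on $X$, independent of the Hitchin fibration.
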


 Consider the moduli space $\al{SU}_{X,\tau}^{\sigma,-}(2r)$ of trivial determianant $\sigma-$alternating vector bundles. In \cite{Z3}, it has been  shown that the determinant line bundle over  $\al{SU}_{X,\tau}^{\sigma,-}(2r)$ admits a square root, called a Pfaffian of cohomology bundle $\sr P_L$. Moreover, the Hitchin system was studied on these moduli spaces and it was also shown that this line bundle satisfies a BNR duality in the unramified case.\\ 

 Let $\rm P$ be the Prym variety of $X\ra Y$, and let $\sr L$ be a line bundle on $\rm P$ associated to a principal polarization. 
 Our main result is the following 
\begin{theo}[Theorem \ref{main}] There exists a canonical isomorphism
 $$ H^0(\al{SU}_{X,\tau}^{\sigma,-}(2r), \sr{P}_L)\cong H^0({\rm P},\sr L^r)^*.$$ 
\end{theo}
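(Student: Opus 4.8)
The plan is to follow the blueprint of the original BNR argument, adapted to the alternating/symplectic setting where the relevant line bundle is the Pfaffian square root $\sr P_L$ rather than the full determinant. First I would construct a universal theta divisor on the product $\al{SU}_{X,\tau}^{\sigma,-}(2r)\times {\rm P}$. Fix a $\sigma$-invariant theta characteristic $\kappa$ on $X$, so that $E\otimes\alpha\otimes\kappa$ has vanishing Euler characteristic for $E$ of rank $2r$ with trivial determinant and $\alpha\in{\rm P}$ of degree $0$, and set
$$\mathcal D=\{(E,\alpha): h^0(X, E\otimes\alpha\otimes\kappa)\neq 0\}.$$
Since $\sigma^*\alpha\cong\alpha^{-1}$ for $\alpha\in{\rm P}$ and $E$ is $\sigma$-alternating, the bundle $E\otimes\alpha\otimes\kappa$ carries a $\sigma$-twisted skew-symmetric self-duality; consequently the complex computing its cohomology is equipped with a symplectic pairing, and $\mathcal D$ is cut out not merely by a determinantal section but by a canonical Pfaffian section. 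This is precisely the geometric origin of the square root, and the reason $\mathcal D$ is a section of $\sr P_L\boxtimes(\cdots)$ rather than of $\sr D\boxtimes(\cdots)$.

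The second step is to identify the line bundle $\mathcal O(\mathcal D)$ by the see-saw theorem. Restricting to a slice $\al{SU}_{X,\tau}^{\sigma,-}(2r)\times\{\alpha\}$ recovers, by the construction of the Pfaffian of cohomology bundle in \cite{Z3}, the line bundle $\sr P_L$. Restricting to a slice $\{E\}\times{\rm P}$ reduces to pulling back the principal polarization under the Prym theta map $\alpha\mapsto\Theta_{E\otimes\alpha\otimes\kappa}$; the multiplicity $r$ (giving $\sr L^r$) arises because the rank is $2r$ while the skew pairing halves the determinant, so that the induced polarization is $r$ times the principal one. Granting these two computations, see-saw yields $\mathcal O(\mathcal D)\cong\sr P_L\boxtimes\sr L^r$, and the Pfaffian section $s_{\mathcal D}$ then defines, through the Künneth decomposition $H^0(\sr P_L\boxtimes\sr L^r)=H^0(\sr P_L)\otimes H^0(\sr L^r)$, the strange duality morphism
$$\mathrm{SD}:H^0(\al{SU}_{X,\tau}^{\sigma,-}(2r),\sr P_L)\lra H^0({\rm P},\sr L^r)^*.$$

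To prove that $\mathrm{SD}$ is an isomorphism I would argue by representation theory of the theta group together with a dimension count. The Prym acts on $\al{SU}_{X,\tau}^{\sigma,-}(2r)$ by $E\mapsto E\otimes\alpha$ for $\alpha$ in the torsion subgroup ${\rm P}[2r]$ (tensoring preserves the alternating structure, and $\alpha^{2r}=\al O$ preserves the trivial determinant), and this action lifts to $\sr P_L$, giving a theta-group representation on $H^0(\al{SU}_{X,\tau}^{\sigma,-}(2r),\sr P_L)$. By construction $\mathrm{SD}$ intertwines it with the Schr\"odinger representation of the Heisenberg group $\mathcal G$ of $({\rm P},\sr L^r)$ on $H^0({\rm P},\sr L^r)$. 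Since the latter is the unique irreducible weight-one representation of $\mathcal G$, of dimension $r^{\dim {\rm P}}$, it suffices to check that $\mathrm{SD}$ is nonzero and that the source has dimension $r^{\dim {\rm P}}$; Schur's lemma then forces $\mathrm{SD}$ to be an isomorphism. Nonvanishing amounts to exhibiting a single pair $(E,\alpha)$ with $h^0(X,E\otimes\alpha\otimes\kappa)=0$, which I would produce via the spectral/abelianization description from Section \ref{2} (the same Prym ${\rm \tilde P}$ and dominant pushforward map used for the first theorem). For the dimension of the source I would either invoke the Verlinde-type count at level one or, more self-containedly, deduce it from the first theorem by realizing $\al U_{X,0}^{\sigma,-}(2r)$ as fibered over ${\rm P}$ with fibers the fixed-determinant spaces and applying a Fourier--Mukai/pushforward decomposition of $\sr D$ over ${\rm P}$.

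The main obstacle I anticipate is Step two, specifically the bookkeeping of the square root: setting up the symplectic form on the cohomology of $E\otimes\alpha\otimes\kappa$ compatibly in families, checking that the Pfaffian section descends to $\sr P_L\boxtimes\sr L^r$ (and not to its square), and pinning down the exact power $r$ of the Prym polarization — a point made delicate by the classical factor of $2$ relating $\Theta_{J_X}|_{\rm P}$ to the principal polarization $\sr L$. Closely related is the metaplectic subtlety in Step three: because $\sr P_L$ is a square root, the symmetry acting on it is governed by ${\rm P}[2r]$ rather than ${\rm P}[r]$, so one must verify that the resulting representation descends to a genuine weight-one representation of the Heisenberg group $\mathcal G$ of $({\rm P},\sr L^r)$ in order for Schur's lemma to apply.
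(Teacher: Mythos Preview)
Your approach differs substantially from the paper's. You propose to construct the strange duality map directly via a Pfaffian divisor on $\al{SU}_{X,\tau}^{\sigma,-}(2r)\times{\rm P}$ and then prove it is an isomorphism by Schur's lemma together with a separate dimension count for the source. The paper instead abelianizes through the Hitchin system: it uses the dominant pushforward $\hat q_*:\hat{\rm S}\to\al{SU}_{X,0}^{\sigma,-}(2r)$ from the complementary abelian subvariety $\hat{\rm S}\subset\hat{\rm P}_0$ (where $\hat{\rm P}_0$ is the principally polarized Prym of the normalized spectral cover) to obtain an injection $H^0(\al{SU},\sr P)\hookrightarrow H^0(\hat{\rm S},\hat{\sr M})$. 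This injection is equivariant for the theta group $\al G(\hat{\sr M})\cong\al G(\sr P)$, and since $H^0(\hat{\rm S},\hat{\sr M})$ is already the unique irreducible weight-one representation, the injection is forced to be an isomorphism with no a priori knowledge of $\dim H^0(\al{SU},\sr P)$. The identification $H^0(\hat{\rm S},\hat{\sr M})\cong H^0({\rm P},\sr L^r)^*$ then comes for free from the complementary-pair duality of \cite[Proposition~2.4]{BNR} applied to $(\hat{\rm S},{\rm P})$ inside $\hat{\rm P}_0$. So the paper's detour through $\hat{\rm S}$ is precisely what makes the argument self-contained: the dimension of $H^0(\al{SU},\sr P)$ is an output, not an input.

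The weak point in your outline is that dimension count. A Verlinde formula would be external to the paper, and your alternative --- deducing the dimension from $\dim H^0(\al U_{X,0}^{\sigma,-}(2r),\sr P_L)=1$ via the fibration over ${\rm P}$ --- requires first knowing that $H^0(\al{SU},\sr P)$ is a genuine weight-one $\al G(\sr L^r)$-module (so that it decomposes as copies of the Stone--von~Neumann irreducible), which is essentially what you are trying to establish. Also, a small correction: the irreducible has dimension $2^{g_Y}r^{g_Y+n-1}$, not $r^{\dim{\rm P}}$, because $\sr L$ on ${\rm P}$ is not principal but of type $(1,\dots,1,2,\dots,2)$ with $g_Y$ twos, so $\sr L^r$ has type $(r,\dots,r,2r,\dots,2r)$.
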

Note that the power of the polarization $\sr L$ is the half of the rank of vector bundles which is  $2r$, and this is because the determinant of cohomology has a square root however, the polarization $\sr L$ doesn't have a square root since the Prym variety $\rm P$ is not principally polarized.\\
As corollary, we deduce the dimensions of these spaces. 
\begin{coro}
    $$\dim H^0(\al{SU}_{X,\tau}^{\sigma,-}(2r), \sr{P}_L)=2^{g_Y}r^{g_Y+n-1}.$$\\
\end{coro}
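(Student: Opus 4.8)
The statement follows by combining Theorem~\ref{main} with the Riemann--Roch theorem on the abelian variety ${\rm P}$, so the plan is essentially a dimension count. Since the isomorphism of Theorem~\ref{main} identifies $H^0(\al{SU}_{X,\tau}^{\sigma,-}(2r),\sr{P}_L)$ with the dual of $H^0({\rm P},\sr L^r)$, and a finite dimensional vector space has the same dimension as its dual, it suffices to evaluate $\dim H^0({\rm P},\sr L^r)$.

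First I would record the two numerical invariants of the Prym variety. As $\pi:X\ra Y$ is a double cover ramified over $2n$ points, the Riemann--Hurwitz formula gives $g_X=2g_Y+n-1$, whence
$$p:=\dim{\rm P}=g_X-g_Y=g_Y+n-1,$$
which already accounts for the exponent of $r$ in the statement. Next I would pin down the type of the polarization $\sr L$. It is induced by the restriction to ${\rm P}$ of the principal polarization of $J_X$, and in the ramified case this restriction fails to be principal: using that the invariant and anti-invariant parts of $H^1(X,\bb Z)$ are orthogonal for the intersection form and that $\pi^*$ doubles this form on the invariant part $\pi^*H^1(Y,\bb Z)$, one computes the elementary divisors of the form on the anti-invariant lattice $H^1(X,\bb Z)^-$ and finds that $\sr L$ has type $(1,\ldots,1,2,\ldots,2)$ with $g_Y$ entries equal to $2$. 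In particular
$$\chi(\sr L)=\textstyle\prod_i d_i=2^{g_Y}.$$
This is the one genuinely non-formal point, and I expect it to be the main obstacle, since it is where the non-principality of the Prym polarization enters and where the factor $2^{g_Y}$ originates; note that, consistently with the Remark following Theorem~\ref{main}, a polarization of this type admits no square root.

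Finally, $\sr L$ being ample, so is $\sr L^r$, which has type $(r,\ldots,r,2r,\ldots,2r)$; hence its higher cohomology vanishes and Riemann--Roch on ${\rm P}$ gives
$$\dim H^0({\rm P},\sr L^r)=\chi(\sr L^r)=r^{\,p}\,\chi(\sr L)=r^{\,g_Y+n-1}\,2^{g_Y}.$$
Combining this with the dimension equality coming from the isomorphism of Theorem~\ref{main} yields the asserted value of $\dim H^0(\al{SU}_{X,\tau}^{\sigma,-}(2r),\sr{P}_L)$.
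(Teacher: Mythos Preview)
Your proposal is correct and follows the same route as the paper: once Theorem~\ref{main} is established, the dimension is read off from $\dim H^0({\rm P},\sr L^r)$ via Riemann--Roch on the abelian variety, using that $\dim{\rm P}=g_Y+n-1$ and that $\sr L$ has type $(1,\dots,1,\underbrace{2,\dots,2}_{g_Y})$ (the latter being recorded in the paper in the proof of the proposition computing the type of $\hat{\sr M}$).
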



\textbf{Acknowledgement:} I am very much indebted to C. Pauly and S. Mukhopadhyay for useful discussions about these questions. I would also like to thank Z. Ouaras. and A. Peón-Nieto.

\section{$\sigma-$alternating vector bundles}\label{2}
For the convenience of the reader, we recall here some properties of $\sigma-$anti-invariant vector bundles.\\
Let $X$ be a complex smooth projective curve of genus $g_X$ with an involution $\sigma:X\ra X$ such that ${\rm Fix}(\sigma)\not=\emptyset$. Denote by $Y=X/\sigma$ and let $g_Y$ be its genus. We assume that $g_Y\geqslant2$. Let $K_X$ and $K_Y$ be their  canonical bundles.  The Hurwitz formula gives $$g_X=2g_Y+n-1,$$ where $n$ is the half of the number of ramification points in $X$. 

A vector bundle $E$ over $X$ is called $\sigma-$anti-invariant if there exists an isomorphism $\psi:\sigma^*E\ra E^*$. If $E$ is stable, then this  last isomorphism is either symmetric  $\sigma^*\psi=\psi$ or alternating $\sigma^*\psi=-\psi$. In the first case, we say that $E$ is $\sigma-$symmetric, and in the other case, namely $\sigma^*\psi=-\psi$, then $E$ is called $\sigma-$alternating. Note that, since the cover is supposed to be ramified, there is no $\sigma-$alternating bundle with odd rank and this is because $\psi$ induces over a ramification point an anti-symmetric linear isomorphism $$\psi_p:E_p\ra E_p^*,$$ which implies that the dimension of $E_p$ is even. In this paper, we mainly deal with $\sigma-$alternating vector bundles.

A pair $(E,\psi)$ is called semistable if for any $\sigma-$isotropic subbundle $F\subset E$, we have $$\dfrac{\deg F}{\rk F}\leqslant \dfrac{\deg E}{\rk E},$$ where a $\sigma-$isotropic subbundle is a vector subbundle $F$ such that the following composition  map $$\sigma^*F\hookrightarrow \sigma^*E\ra E^*\twoheadrightarrow F^*$$ is zero. The  moduli space of semistable pairs $(E,\psi)$ of $\sigma-$alternating vector bundles and a $\sigma-$alternating isomorphism, which we denote $\al{U}_X^{\sigma,-}(2r)$, has been constructed in a previous work of the author (\cite{Z2}). \\

 It was shown in \cite{Z} that the moduli space  $\al{U}_{X}^{\sigma,-}(2r)$ of  $\sigma-$alternating bundles has two connected components distinguished by the parity of $h^0(E\otimes \kappa)$ where $\kappa$ is an even theta characteristic over $X$.  Note the similarity with  the Stiefel-Whitney class that distinguishes the connected components of the moduli space of orthogonal bundles. \\ We denote by $\al{U}_{X,0}^{\sigma,-}(2r)$ and $\al{U}_{X,1}^{\sigma,-}(2r)$ these two connected components with the obvious meaning of notations. 
 
 Consider now the moduli space $\al{SU}_{X,s}^{\sigma,-}(2r)$ of \emph{stable} $\sigma-$alternating vector bundles with trivial determinant. This moduli space   has $2^{2n-1}$ connected components, half of them are contained in $\al{U}_{X,0}^{\sigma,-}(2r)$ and they are distinguished by a topological type defined as follows: let $(E,\psi)$ be a stable $\sigma-$alternating vector bundle such that $\det E=\al O_X$ and $\det(\psi)= 1$. Then over a ramification point $p\in R$, $\psi$ induces an anti-symmetric isomorphism $\psi_p:E_p\ra E_p^*$ with determinant equals $1$, hence its Pfaffian $\rm Pf(\psi_p)= \pm 1$. We define the type of $(E,\psi)$ to be $$\tau=({\rm Pf}(\psi_p))_{p\in R}\mod \pm1. $$  Note that this is well defined, i.e. it doesn't depend on the choice of $\psi$ as long as $E$ is supposed to be stable. We denote $\al{SU}_{X,\tau}^{\sigma,-}(2r)$ the  moduli space of semistable $\sigma-$alternating vector bundles $(E,\psi)$ of type $\tau$.

Denote by $\Delta=(\det\pi_*\al O_X)^{-1}.$ Note that $\Delta$ is a degree $n$ line bundle on $Y$.
\begin{lemm}\label{descendpfaffian}
Let $L$ be a line bundle on $X$ with norm $K_Y\otimes \Delta$, then 
the determinant line bundle $\sr D$ over  $\al{U}_X^{\sigma,-}(2r)$ admits a square root, called a Pfaffian of cohomology line bundle and denoted $\sr P_L$. 
\end{lemm}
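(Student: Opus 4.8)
The plan is to realize $\sr D$ as the determinant of a perfect complex that carries a nondegenerate \emph{alternating} self-duality, and then to invoke the Pfaffian-of-cohomology construction, which attaches a canonical square root to the determinant of any such complex. All of this is carried out relatively, over a parameter space $S$ equipped with a universal pair $(\al E,\psi)$ (for instance an open part of a suitable Quot scheme, or the stack), so that the resulting line bundle descends to $\al U_X^{\sigma,-}(2r)$.

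First I would translate the hypothesis on $L$ into a self-duality condition on $X$. Using the standard identity $\pi^*\mathrm{Nm}(L)\cong L\otimes\sigma^*L$, together with $\pi^*K_Y\cong K_X(-R)$ and $\pi^*\Delta\cong\al O_X(R)$ (the latter because $\Delta^{\otimes 2}\cong\al O_Y(B)$ with $B=\pi(R)$, so $\pi^*\al O_Y(B)\cong\al O_X(2R)$ and $\pi^*\Delta$ is the effective square root $\al O_X(R)$), the condition $\mathrm{Nm}(L)\cong K_Y\otimes\Delta$ becomes
$$L\otimes\sigma^*L\cong K_X.$$
In particular $\deg L=g_X-1$, so that $\chi(E\otimes L)=\deg E=0$ for the relevant pairs and $\sr D=\bigl(\det Rp_*(\al E\otimes L)\bigr)^{-1}$ is an honest line bundle, where $p\colon X\times S\to S$ is the projection.

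Next I would build the self-duality of the two-term complex $Rp_*(\al E\otimes L)$. Relative Serre duality along $X$ gives $Rp_*(\al E\otimes L)^\vee\cong Rp_*\bigl(\al E^*\otimes L^{-1}\otimes K_X\bigr)[1]$. The alternating isomorphism $\psi\colon\sigma^*\al E\xrightarrow{\sim}\al E^*$ identifies $\al E^*\otimes L^{-1}\otimes K_X$ with $\sigma^*\al E\otimes L^{-1}\otimes K_X$, and applying $\sigma^*$ together with $L\otimes\sigma^*L\cong K_X$ and $\sigma^*K_X\cong K_X$ yields $\sigma^*(\al E^*\otimes L^{-1}\otimes K_X)\cong\al E\otimes L$. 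Since $\sigma$ is an automorphism of $X$, pullback by $\sigma$ is an isomorphism on relative cohomology, and we obtain a quasi-isomorphism
$$Rp_*(\al E\otimes L)^\vee\;\cong\;Rp_*(\al E\otimes L)[1].$$
Thus $Rp_*(\al E\otimes L)$ is self-dual up to the Serre-duality shift by one.

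The crux — and the step I expect to be the main obstacle — is to verify that the induced pairing is \emph{alternating} rather than symmetric. The relevant sign is the product of three contributions: the skew sign $-1$ of the form, encoded in $\sigma^*\psi=-\psi$; the sign intrinsic to the Serre pairing on a curve under the shift by one; and the sign produced by $\sigma^*$ as it interchanges $H^0$ and $H^1$. I would pin this down by reducing to the fibrewise cup-product pairing $H^0(E\otimes L)\times H^1(E\otimes L)\to H^1(K_X)\cong\bb C$ and tracking the action of $\psi$ and of $\sigma^*$ on it; the alternating hypothesis $\sigma^*\psi=-\psi$ is precisely what forces the total pairing to be skew (the symmetric case $\sigma^*\psi=\psi$ would produce an orthogonal structure instead, for which no Pfaffian exists). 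Granting the alternating property, $Rp_*(\al E\otimes L)$ is a self-dual perfect complex carrying a nondegenerate symplectic form, and the Pfaffian-of-cohomology construction (as in the symplectic case used in \cite{Z3}) furnishes a canonical line bundle $\sr P_L$ with a canonical isomorphism $\sr P_L^{\otimes 2}\cong\bigl(\det Rp_*(\al E\otimes L)\bigr)^{-1}=\sr D$. I would then check that the whole construction is equivariant for the automorphisms of the parameter data, so that $\sr P_L$ descends from $S$ to $\al U_X^{\sigma,-}(2r)$, completing the proof.
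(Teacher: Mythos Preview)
Your overall architecture is sound and closely related to the paper's argument, but there is a genuine error in the sign/parity analysis that inverts the whole picture.

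The paper does not work with the complex $Rp_*(\al E\otimes L)$ on $X$ directly. Instead it pushes forward along $\pi$: the family $\pi_*\sr U_L$ is a bundle on $Y$, and the chain of isomorphisms
\[
\pi_*\sr U_L\cong\pi_*(\sigma^*\sr U_L)\cong\pi_*(\sr U_L^*\otimes\al O(R))\otimes K_Y\cong(\pi_*\sr U_L)^*\otimes K_Y
\]
(using relative duality for the last step) endows $\pi_*\sr U_L$ with a $K_Y$-valued bilinear form. The paper asserts, citing \cite{Z3}, that this form is \emph{symmetric}; one then invokes \cite[Proposition~7.9]{So}, which is the Pfaffian construction for \emph{orthogonal} bundles, to obtain a square root of $\sr D_{\pi_*\sr U_L}=\sr D_{\sr U_L}$. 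Descent to the moduli space is then handled via Kempf's lemma on the Quot scheme.

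Your assertion that ``the symmetric case $\sigma^*\psi=\psi$ would produce an orthogonal structure instead, for which no Pfaffian exists'' is exactly backwards. Pfaffian line bundles exist for orthogonal (symmetric) structures, not for symplectic ones: already for $\mathrm{Sp}_2=\mathrm{SL}_2$ the determinant line generates the Picard group and admits no square root, whereas Mumford's parity argument for theta characteristics and its generalizations give the Pfaffian in the orthogonal case. In fact the $\sigma$-alternating condition on $E$ produces, after pushforward and relative duality (which contributes its own sign), a \emph{symmetric} form on $\pi_*(E\otimes L)$; the $\sigma$-symmetric case is the one yielding a symplectic pushforward. So your sign-chase, as stated, reaches the wrong conclusion, and the ``symplectic Pfaffian'' you appeal to does not furnish the square root you need. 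If you want to run the argument upstairs on $X$ rather than on $Y$, you must redo the parity computation carefully (tracking the sign from Serre duality and from $\sigma^*$ in addition to the $-1$ in $\sigma^*\psi=-\psi^t$) and recognize that the resulting self-duality on $Rp_*(\al E\otimes L)$ is of the type for which the orthogonal Pfaffian machinery applies.
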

\begin{proof}
    Let $\sr {U}_X^{\sigma,-}(2r)$ be the stack of semi-stable $\sigma-$alternating vector bundles and let $\sr U$ be a universal family over $\sr{U}_X^{\sigma,-}(2r)\times X$. Let $\sr U_L:=\sr U\otimes q_2^*L$, where $q_2:\sr{U}_X^{\sigma,-}(2r)\times X\ra X$ is the second projection. Then the family $\pi_*\sr{U}_L$ has a quadratic form with values in $K_Y$. Indeed, we have 
    \begin{align*}
        \pi_*\sr{U}_L& \cong \pi_*(\sigma^*\sr{U}_L)\\ & \cong \pi_*(\sr{U}_L^*\otimes q_2^{-1}(R))\otimes q_2^*K_Y \\ &\cong \pi_*(\sr{U}_L)^*\otimes q_2^*K_Y,
    \end{align*}
where the last isomorphism is giving by the relative duality (see \cite[ Ex III.6.10]{HA}). This isomorphism induces a symmetric bilinear form on the family with values in $K_Y$ (see \cite{Z3}).  Using \cite[Proposition $7.9$]{So}, we obtain a square root of the determinant of cohomology $\sr D_{\pi_*\sr U_L}$ associated $\pi_*{\sr U_L}$. But since $\sr D_{\pi_*\sr U_L}=\sr D_{\sr U_L}$, we deduce that the determinant line bundle has square root at the level of stacks.\\ To show that this square root descends to the moduli space, we need to use the Kempf's lemma at the level of Quot scheme. This can be done as in \cite{Z3}.
\end{proof}
The Pfaffian line bundle admits a nonzero global section, which insures the existence of a Pfaffian divisor. For  $L\in \x{Nm}_{X/Y}^{-1}(K_Y\otimes\Delta)$, denote by $\Theta_L$ the divisor in $\al{U}_X(2r,0)$ supported on vector bundles $E$ such that $E\otimes L$ has a non-zero global section. 

\begin{lemm}\label{effective}
	For any $L$ as above, the  restriction of the divisor $\Theta_{L}\subset \al U_X(2r,0)$ to $\al{U}_{X,0}^{\sigma,-}(r)$  is again a divisor.
	 Moreover there exists an effective divisor  $\Xi_L$ in $\al{U}_X^{\sigma,-}(2r)$ such that  $\al O(\Xi_L)\cong\sr P_L$ and  $$2\Xi_L\equiv\Theta_L.$$
\end{lemm}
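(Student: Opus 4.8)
The plan is to split the statement into its two assertions and to notice that both are governed by a single section of $\sr D$, namely the canonical theta (determinant) section whose zero locus is $\Theta_L$, together with the Pfaffian square root furnished by Lemma \ref{descendpfaffian}. Write $\theta_L\in H^0(\al U_X(2r,0),\sr D)$ for the determinant section cutting out $\Theta_L$. I would then prove: (i) the restriction of $\theta_L$ to the component $\al U_{X,0}^{\sigma,-}(2r)$ is not identically zero, which gives the first assertion; and (ii) the Pfaffian construction produces a section $\x{pf}_L$ of $\sr P_L$ whose square equals $\theta_L|_{\al U_{X,0}^{\sigma,-}(2r)}$ under the isomorphism $\sr P_L^{\otimes 2}\cong\sr D$, so that $\Xi_L:=\x{div}(\x{pf}_L)$ satisfies $2\Xi_L=\Theta_L$.

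For (i), first record the numerology. Since $\x{Nm}_{X/Y}(L)=K_Y\otimes\Delta$ has degree $(2g_Y-2)+n=g_X-1$ and the norm preserves degrees, $\deg L=g_X-1$, whence $\chi(E\otimes L)=0$ for every $E$ of rank $2r$ and degree $0$; this is exactly why $\Theta_L$ is a divisor on the ambient space. To see that the anti-invariant component is not contained in $\Theta_L$, I would exhibit a single polystable $\sigma$-alternating bundle $E$ in $\al U_{X,0}^{\sigma,-}(2r)$ with $H^0(X,E\otimes L)=0$. A convenient choice is $E=\bigoplus_{i=1}^r(\ell_i\oplus\ell_i^{-1})$ with $\ell_i\in\x{Nm}_{X/Y}^{-1}(\al O_Y)$ a general element of the Prym variety; the swap pairing makes $E$ $\sigma$-alternating with $\det E=\al O_X$ and $\det\psi=1$, and $H^0(E\otimes L)=\bigoplus_i\big(H^0(\ell_i\otimes L)\oplus H^0(\ell_i^{-1}\otimes L)\big)$ vanishes for generic $\ell_i$, because $\ell_i^{\pm1}\otimes L$ is a general line bundle of degree $g_X-1$. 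One then checks, through the parity of $h^0(E\otimes\kappa)$, that the $\ell_i$ may be chosen so that $E$ lands in the component $\al U_{X,0}^{\sigma,-}(2r)$. Alternatively, since the pushforward map ${\rm\tilde P}\ra\al U_{X,0}^{\sigma,-}(2r)$ from the Prym variety of the spectral curve is dominant, it suffices to produce one spectral line bundle whose pushforward has no sections after twisting by $L$, again a genericity statement on ${\rm\tilde P}$.

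For (ii), I would apply Soulé's Pfaffian formalism exactly as in Lemma \ref{descendpfaffian}. The isomorphism $\pi_*\sr U_L\cong\pi_*(\sr U_L)^*\otimes q_2^*K_Y$ equips the family on $Y$ with a nondegenerate symmetric $K_Y$-valued form; via Serre duality on $Y$ this induces a skew-symmetric self-duality on the two-term cohomology complex $R\Gamma(Y,\pi_*\sr U_L)=R\Gamma(X,\sr U_L)$, and \cite[Proposition $7.9$]{So} attaches to such a skew self-dual complex both a square root $\sr P_L$ of its determinant $\sr D$ and a canonical Pfaffian section $\x{pf}_L$ of $\sr P_L$ whose square is the determinant section $\theta_L$. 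Passing from the stack to the moduli space by Kempf descent as in \cite{Z3}, the section $\x{pf}_L$ descends, and setting $\Xi_L=\x{div}(\x{pf}_L)$ we obtain $\al O(\Xi_L)\cong\sr P_L$ together with $2\Xi_L=\x{div}(\x{pf}_L^2)=\x{div}(\theta_L)=\Theta_L$ on $\al U_{X,0}^{\sigma,-}(2r)$; in particular $2\Xi_L\equiv\Theta_L$. The nonvanishing of $\x{pf}_L$, hence the effectivity of $\Xi_L$, is then automatic from (i): were $\x{pf}_L$ identically zero, so would be $\theta_L=\x{pf}_L^2$, contradicting the previous step.

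I expect the genuine work to sit in (i)—verifying that the explicit example is semistable, has trivial determinant with $\det\psi=1$ and the prescribed Pfaffian type $\tau$, and lies in the component $\al U_{X,0}^{\sigma,-}(2r)$, i.e.\ the bookkeeping of the parity invariant—and in checking in (ii) that the section produced by Soulé's construction is literally the square root of $\theta_L$ rather than merely of some section of $\sr D$. It is this last compatibility that upgrades the numerical relation $\sr P_L^{\otimes2}\cong\sr D$ to the divisor identity $2\Xi_L\equiv\Theta_L$.
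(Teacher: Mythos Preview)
Your overall architecture is right, and part (ii) is essentially what the paper does: once $\Theta_L$ restricts to a genuine divisor, the Pfaffian formalism (the paper quotes \cite[\S 7.10]{LS}, which packages exactly the ``$\x{pf}_L^2=\theta_L$'' statement you spell out) produces the effective $\Xi_L$ with $2\Xi_L=\Theta_L$.

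The gap is in your explicit witness for (i). For a generic $\ell$ in the Prym variety the bundle $\ell\oplus\ell^{-1}$ does \emph{not} admit a $\sigma$-alternating form. Any isomorphism $\psi:\sigma^*(\ell\oplus\ell^{-1})\to(\ell\oplus\ell^{-1})^*$ has off-diagonal entries lying in $H^0(X,\ell^{\pm2})$, which vanish for $\ell$ generic; so $\psi$ is forced to be diagonal, built from an isomorphism $\phi:\sigma^*\ell\xrightarrow{\sim}\ell^{-1}$. But in the ramified case every anti-invariant line bundle is $\sigma$-symmetric (an alternating $\phi$ would vanish at each ramification point), hence $\sigma^*\phi=\phi^t$ and the induced $\psi$ satisfies $\sigma^*\psi=+\psi^t$, not $-\psi^t$. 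Thus your ``swap pairing'' does not exist, and the diagonal one lands you in the $\sigma$-symmetric locus rather than in $\al U_X^{\sigma,-}(2r)$.

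The paper's fix is to take instead $E=(M\oplus\sigma^*M^{-1})^{\oplus r}$ for an arbitrary $M\in{\rm Pic}^0(X)$ with $h^0(L\otimes M)=0$; here the swap $\psi=\left(\begin{smallmatrix}0&1\\-1&0\end{smallmatrix}\right)$ genuinely makes sense because its off-diagonal entries are identities $M^{-1}\to M^{-1}$ and $\sigma^*M\to\sigma^*M$, and one checks $\sigma^*\psi=-\psi^t$. The paper then uses the key identity $L\otimes\sigma^*L\cong K_X$ (from $\x{Nm}(L)=K_Y\otimes\Delta$) together with Serre duality to get $h^0(L\otimes\sigma^*M^{-1})=h^0(L\otimes M)=0$ \emph{automatically}, so a single vanishing suffices. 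Your spectral alternative would also work, but the direct construction is more elementary and avoids invoking the Hitchin picture before it is set up.
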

\begin{proof}
Note first that $L\otimes\sigma^*L\cong \pi^*(K_Y\otimes\Delta)\cong K_X$.	Let $M\in{\rm Pic}^0(X)$ such that $h^0(L\otimes M)=0$. We have \begin{align*}
	    h^0(L\otimes\sigma^*M^{-1}) &= h^1(L\otimes \sigma^*M^{-1})\\ &= h^0(L^{-1}\otimes \sigma^*M\otimes K_X)\\ &= h^0(\sigma^*L\otimes \sigma^*M)\\&= h^0(L\otimes M)\\&=0 
	\end{align*} 
 Hence $h^0(L\otimes (M\oplus \sigma^*M^{-1}))=0$. Let  $E=(M\oplus\sigma^*M^{-1})^{\oplus r}$. It is clearly a semistable anti-invariant vector bundle of rank $2r$ with a  $\sigma-$alternating isomorphism $\sigma^*E\ra E^*$  $$\psi=\begin{pmatrix}
	0 & 1 \\ -1 & 0
	\end{pmatrix}^{\oplus r}.$$  Now we have $h^0(E\otimes L)=0$, so the restriction of the divisor $\Theta_L$ to $\al U_X^{\sigma,-}(2r)$ is again a divisor.
	
	Moreover, by \cite[\S 7.10]{LS}, since the restriction of $\Theta_L$ to the moduli space $\al{U}_{X}^{\sigma,-}(2r)$ is a divisor, there exists an effective divisor ${\Xi}_L$ such that $2{\Xi}_L=\Theta_L|_{\al U_{X}^{\sigma,-}(2r)}$.  In particular,  $\sr P_L$ has a non-zero global section.  
\end{proof}
\begin{rema}
    Note that the restriction $\Xi_L$ to $\al{SU}_{X,\tau}^{\sigma,-}(2r)$ is again a divisor, for any type $\tau$. Indeed, if we set $\lambda=M\otimes \sigma^*M^{-1}$, then $E_\lambda=E\otimes\lambda^{-1}$ has trivial determinant (and it can have any type since it is strictly polystable) and since ${\rm Nm}_{X/Y}(\lambda)=\al O_Y$,  we have  $L\otimes \lambda\in{\rm Nm}_{X/Y}^{-1}(K_Y\otimes \Delta)$, and clearly  $h^0(L\otimes\lambda\otimes E_\lambda)=0$, so $E_\lambda\not\in\Xi_{L\otimes\lambda}$.\\
\end{rema}
\section{Hitchin Systems on $\al U_X^{\sigma,-}(2r)$}

The Hitchin system is a Hamiltonian integrable system over the moduli space of vector bundles, introduced by N. Hitchin (\cite{NH}). It is a powerful tool for studying this moduli space, as well as the spaces of generalized theta functions. Let $\al U_X(r,0)$ be the moduli space of semistable vector bundles on $X$ of rank $r$ and degree $0$. The cotangent space to $\al U_X(r,0)$ at $E$ can be identified with $H^0(X,{\rm End}(E)\otimes K_X)$. By considering a basis of invariant polynomials under the adjoint action of $\ak{gl}_n$, we get a map $$T^*_E\al{U}_X(r,0)\ra \bigoplus_{i=0}^rH^0(X,K^i_X)=:W.$$ 
This is a Lagrangian fibration which is  known to be an algebraically completely integrable system, by the mean, its generic fiber is an open set of an abelian variety, which is, in this case, the Jacobian of the base curve.  

On the other hand, the cotangent space to $\al{U}_X^{\sigma,-}(2r)$ at $E$ can be identified with the eigenspace $H^0(X,{\rm End}(E)\otimes K_X)_-$ associated to the $-1$ eigenvalue of the involution induced by $\psi$ and the linearization on $K_X$ (see \cite{Z}). We can show that the above map induces a map    $$T^*_E\al{U}_X^{\sigma,-}(2r)\ra W^{\sigma,-}\subset \bigoplus_{i=0}^rH^0(X,K^i_X)_+,$$ where $H^0(X,K_X^i)_+$ is the invariant subspace and $W^{\sigma,-}$ is an affine subset defined by the condition that the characteristic polynomial is perfect square over each ramification point. \\
We gether some results in the following
\begin{prop}[\cite{Z}]\noindent
\begin{itemize}
    \item [$(i)$] $\dim W^{\sigma,-}=\dim \al U_X^{\sigma,-}(2r)=2r^2(g_X-1)-nr$.
    \item [$(ii)$] For general spectral data $s\in W^{\sigma,-}$, the associated spectral curve $\tilde X_s$ is singular with nodes as singularities. 
    \item [$(iii)$] There exists an involution $\tilde \sigma$ on $\tilde X_s$  that lifts the involution $\sigma$ and whose fixed points set is precisely $\tilde R:=q^{-1}(R)$. Moreover $\tilde{R}$ is precisely the singular locus.
\end{itemize}
\end{prop}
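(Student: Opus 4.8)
The plan is to handle the three parts separately, reducing $(iii)$ and most of $(ii)$ to a local analysis over the ramification divisor.

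For $(i)$ I would compute $\dim\al U_X^{\sigma,-}(2r)$ from the cotangent space at a stable point, namely $H^0(X,{\rm End}(E)\otimes K_X)_-$, the $(-1)$-eigenspace of the involution $\iota$ assembled from $\sigma^*$, the transpose through $\psi$, and the identification $\sigma^*K_X\cong K_X$ coming from $d\sigma$. Since $\iota$ is induced by $\sigma$, whose fixed locus is the $2n$ ramification points, I apply the holomorphic Lefschetz fixed point formula to $H^\bullet(X,{\rm End}(E)\otimes K_X)$. The only local datum at $p\in R$ is the trace of $\iota$ on ${\rm End}(E_p)\otimes K_{X,p}$: the endomorphism part $A\mapsto\psi_p^{-1}A^t\psi_p$ has $(+1)$- and $(-1)$-eigenspaces of dimensions $2r^2-r$ and $\dim\ak{sp}_{2r}=2r^2+r$, hence trace $-2r$, and twisting by the action $d\sigma=-1$ on $K_{X,p}$ gives fibre trace $+2r$; the denominator $1-(d\sigma_p)^{-1}$ equals $2$. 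The $2n$ fixed points thus contribute $2nr$, and since $\iota$ acts by $+1$ on the scalars $H^1\cong H^0({\rm End}\,E)^*$, I obtain $\x{Tr}(\iota\mid H^0)=2nr+1$; combined with $\dim H^0(X,{\rm End}(E)\otimes K_X)=4r^2(g_X-1)+1$ this yields $\dim H^0(X,{\rm End}(E)\otimes K_X)_-=2r^2(g_X-1)-nr$. The equality $\dim W^{\sigma,-}=\dim\al U_X^{\sigma,-}(2r)$ then follows from the Lagrangian nature of the Hitchin map, whose generic fibre is the complementary-dimensional abelian variety ${\rm\tilde P}$; alternatively one counts $\dim W^{\sigma,-}$ directly by Riemann--Roch on $Y$ for the invariant sections minus the $nr$ perfect-square conditions.

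For $(ii)$, the spectral curve $\tilde X_s\subset\x{Tot}(K_X)$ is the zero locus of the characteristic polynomial $P(x,\lambda)$ attached to $s$. Away from $R$ I would deduce smoothness of $\tilde X_s$ for general $s$ by a Bertini-type argument for the family of spectral curves over $W^{\sigma,-}$, using that the admissible coefficients separate the relevant jets off the ramification locus. The interesting behaviour is over $p\in R$. Choosing a local coordinate $t$ with $\sigma(t)=-t$ and the induced fibre coordinate $\eta$ on $K_X$, the anti-invariance makes $P$ even in $t$, that is $P(t,\eta)=P(-t,\eta)$, while the defining condition of $W^{\sigma,-}$ forces $P(0,\eta)=Q(\eta)^2$ for a degree-$r$ polynomial $Q$ with, generically, $r$ simple roots $\mu_1,\dots,\mu_r$. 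Near $(0,\mu_j)$ one then has $P(t,\eta)=c(\eta-\mu_j)^2+d\,t^2+(\x{higher order})$, and for general $s$ both $c$ and $d$ are nonzero, so the local equation $(\eta-\mu_j)^2+(d/c)t^2+\cdots=0$ is an ordinary node. Hence for general $s$ the singularities of $\tilde X_s$ are precisely the $r$ nodes lying over each of the $2n$ ramification points.

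For $(iii)$ I would define $\tilde\sigma$ as the restriction to $\tilde X_s$ of the involution $(x,\lambda)\mapsto(\sigma x,-\sigma^*\lambda)$ of $\x{Tot}(K_X)$; it preserves $\tilde X_s$ precisely because $s$ is $\sigma$-anti-invariant, so $P$ is $\tilde\sigma$-invariant, and $q\circ\tilde\sigma=\sigma\circ q$ shows $\tilde\sigma$ lifts $\sigma$. In the coordinates above $\tilde\sigma$ reads $(t,\eta)\mapsto(-t,\eta)$: over $x\notin R$ it interchanges the fibres over $x$ and $\sigma x$ and has no fixed point on $\tilde X_s$, whereas over $p\in R$ it acts trivially on the fibre, so every point of $q^{-1}(p)$ is fixed. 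Therefore $\x{Fix}(\tilde\sigma)=q^{-1}(R)=\tilde R$, and by $(ii)$ this set coincides with the nodal locus; at each node the branches $\eta-\mu_j\approx\pm\sqrt{-d/c}\,t$ are exchanged by $\tilde\sigma$, as expected for an involution fixing a node. The main obstacle is the local analysis in $(ii)$, namely showing that for general $s$ the singularities over $R$ are honest nodes rather than cusps or worse: one must verify that the $t^2$-coefficient $d$ does not vanish identically on $W^{\sigma,-}$, i.e. that the perfect-square conditions leave enough freedom to deform $P$ in the $t^2$-direction independently at each ramification point, which amounts to checking that $W^{\sigma,-}$ is smooth of the expected dimension and that the jet evaluation at the $2n$ fixed points surjects onto the node-controlling quotient. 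The remaining steps — the Lefschetz bookkeeping, the Bertini argument off $R$, and the identification of $\x{Fix}(\tilde\sigma)$ — are then routine.
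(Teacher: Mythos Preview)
The paper does not actually prove this proposition: it is quoted from \cite{Z} and carries no argument here, so there is nothing to compare your sketch against in the present text. That said, your outline is essentially the standard route and matches what one finds in \cite{Z}: the dimension in $(i)$ via the Lefschetz trace on $H^\bullet(X,{\rm End}(E)\otimes K_X)$ with the fibrewise $\ak{sp}_{2r}$/complement splitting at ramification points, and the local picture in $(ii)$--$(iii)$ via the coordinates $(t,\eta)$ with $\tilde\sigma:(t,\eta)\mapsto(-t,\eta)$ forcing $P$ to be even in $t$ and hence nodal at the perfect-square roots over $R$. Your identification of the one nontrivial point --- that for general $s$ the $t^2$-coefficient at each $(0,\mu_j)$ is nonzero, i.e.\ that the jet map from $W^{\sigma,-}$ to the node-controlling quotient is surjective --- is exactly the step that requires care in \cite{Z}; everything else is, as you say, routine.
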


Recall from loc.cit.  that, for general $s\in W^{\sigma,-}$,  we have the following diagram $$\xymatrix{\hat{X}_s\ar[r]^{\ak q}\ar[rd]^{\hat{\pi}} \ar@/^2pc/[rrr]^{\hat{q}}&  \tilde{X}_s \ar[rr]^q \ar[d]^{\tilde{\pi}} && X\ar[d]^\pi \\ &\tilde{Y}_s \ar[rr]^{\tilde{q}}&& Y,} $$
where $\hat X_s$ is the normalization of $\tilde{X}_s$ and  $\tilde{Y}_s=\tilde{X}_s/\tilde \sigma$. 

Denoted by $\hat{S}=\x{Ram}(\hat{X}_s/X)$  and by $\hat{\rm{P}}$ the variety of line bundles $L$ on $\hat{X}_s$ such that $$\hat{\sigma}^*L\cong L^{-1}(\hat{S}).$$ Note that ${\rm \hat{P}}={\rm{Nm}}_{\hat{X}_s/\tilde Y_s}^{-1}(M)$, where $M:=\al O_{\tilde{Y}_s}(S)\otimes\hat{\Delta}\otimes \tilde{q}^*\Delta^{-1}$, $S={\rm Ram}(\tilde{X}_s/\tilde Y_s)$ and $\hat{\Delta}=(\hat{\pi}_*\al O_{\hat{X}_s})^{-1}$ and $\Delta=({\pi}_*\al O_{{X}})^{-1}$ (see \cite{Z} middle of  page 32).  \\
Note that we have \begin{align*} \deg M& =2(g_{\hat{Y}_s}-1)-2r(g_Y-1)-rn \\&= g_{\hat{X}_s}-1-r(g_X-1).\end{align*}

\begin{prop}	For general $s\in W^{\sigma,-}$, we have 
\begin{enumerate}
    \item The pullback map $\hat{q}^*:{\rm Pic}^0(X)\lra{\rm Pic}^0(\hat{X}_s)$ is injective.
 \item     The curve $\tilde Y_s=\tilde X_s/\tilde\sigma$ is smooth.
 \item The double cover $\hat X_s\ra \tilde Y_s$ is unramified.
\end{enumerate}
\end{prop}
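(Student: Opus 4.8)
The plan is to deduce parts $(2)$ and $(3)$ from a single local statement about $\tilde\sigma$ at the nodes, and to treat the injectivity in $(1)$ by a separate monodromy argument.

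First I would determine how $\tilde\sigma$ acts at a node. By the previous proposition ${\rm Fix}(\tilde\sigma)=\tilde R=q^{-1}(R)$ is exactly the singular locus, so over a ramification point $x_0\in R$ the involution $\tilde\sigma$ fixes the entire fibre of $q$ and, in particular, fixes every node lying over $x_0$. Choosing a local coordinate $t$ on $X$ at $x_0$ with $\sigma^*t=-t$ and the fibre coordinate $\lambda$ on ${\rm Tot}(K_X)$, the involution of ${\rm Tot}(K_X)$ lifting $\sigma$ and fixing the whole fibre over $x_0$ is $(t,\lambda)\mapsto(-t,\lambda)$, and this must be $\tilde\sigma$ in view of ${\rm Fix}(\tilde\sigma)=\tilde R$. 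At a node $p$ the tangent cone is a pair of distinct lines invariant under $(t,\lambda)\mapsto(-t,\lambda)$; since the only invariant nodal tangent cone is $\{(\lambda-\lambda(p))^2=c^2t^2\}$ with $c\neq0$ (the degenerate cone with cross term $t(\lambda-\lambda(p))$ is excluded because that term is anti-invariant), the two branches $\lambda-\lambda(p)=\pm c\,t$ are interchanged by $t\mapsto-t$. Thus the crucial claim is that \emph{$\tilde\sigma$ exchanges the two branches at every node}.

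Granting this, $(2)$ follows at once: away from $\tilde R$ the curve $\tilde X_s$ is smooth and $\tilde\sigma$ acts freely, so the quotient is smooth there, while at a node the invariant subring of $\mathbb C[t,\lambda]/\big((\lambda-\lambda(p))^2-c^2t^2\big)$ under $t\mapsto-t$ is generated by $\lambda$ and $t^2$ subject to $(\lambda-\lambda(p))^2=c^2t^2$, hence is isomorphic to $\mathbb C[\lambda]$, which is regular; therefore $\tilde Y_s=\tilde X_s/\tilde\sigma$ is smooth. For $(3)$ I would lift $\tilde\sigma$ to an involution $\hat\sigma$ of the normalization $\hat X_s$. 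Over each node the two points of $\ak q^{-1}(p)$ correspond to the two branches, which $\hat\sigma$ interchanges by the claim, and away from the preimage of $\tilde R$ the map $\ak q$ is an isomorphism and $\hat\sigma$ is free; hence $\hat\sigma$ is fixed-point free and $\hat X_s\ra\hat X_s/\hat\sigma$ is an unramified double cover. Since $\hat X_s/\hat\sigma$ is the normalization of $\tilde X_s/\tilde\sigma=\tilde Y_s$, which is already smooth by $(2)$, we get $\hat X_s/\hat\sigma\cong\tilde Y_s$ and $\hat\pi$ is precisely this unramified double cover.

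For $(1)$ it suffices to show that $\hat q$ does not factor through any connected \'etale cover of $X$ of degree $\geq2$: indeed, if $0\neq L\in{\rm Pic}^0(X)$ satisfied $\hat q^*L\cong\al O_{\hat X_s}$, then $L$ would be a torsion point and $\hat q$ would factor through the associated nontrivial connected cyclic \'etale cover, a contradiction. For general $s$ the curve $\hat X_s$ is connected, and the monodromy of $\hat q$ permutes the $2r$ sheets preserving their grouping into the $r$ pairs $\{\lambda,-\lambda\}$, i.e.\ it acts as signed permutations: the zeros of the determinant section (the constant term of the characteristic polynomial, where a pair meets at $\lambda=0$) produce the sign changes and the discriminant produces the permutations of the pairs. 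The only nontrivial block system is that of the $r$ pairs, corresponding to the intermediate degree-$r$ cover $\bar X_s\ra X$ that remembers only $\lambda^2$; this cover is ramified wherever two pairs collide, hence is not \'etale. Consequently $\hat q$ has no nontrivial connected \'etale factorization and $\hat q^*$ is injective.

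The main obstacle is part $(1)$: parts $(2)$ and $(3)$ reduce cleanly and unconditionally to the branch-exchanging property of $\tilde\sigma$, whereas the injectivity requires verifying, for general $s$, both the connectedness of $\hat X_s$ and that its only intermediate cover over $X$ is the ramified degree-$r$ cover $\bar X_s$ — equivalently that the monodromy of $\hat q$ is the full group of signed permutations. Establishing this genericity statement is the delicate point.
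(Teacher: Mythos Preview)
Your treatment of (2) and (3) is essentially the paper's argument, made explicit: the paper too says (for (3)) that ``the action of $\tilde\sigma$ interchanges the two sheets around any node'' so that $\hat\sigma$ is fixed-point free, and (for (2)) that smoothness ``can be seen locally using the equation of $\tilde X_s$''; your tangent-cone computation and the invariant-subring calculation supply exactly those missing details.

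For (1) you follow the same route as the paper --- injectivity of $\hat q^*$ follows once $\hat q$ admits no nontrivial connected \'etale factorization --- but the paper simply asserts the latter for generic $s$ and then invokes \cite[Proposition~11.4.3]{BL}. Your monodromy sketch is more ambitious, yet its key structural input looks borrowed from a different Hitchin system: you posit that the $2r$ sheets of $\hat q$ pair off as $\{\lambda,-\lambda\}$, that the monodromy lies in the signed permutation group, and that the only nontrivial block is the degree-$r$ intermediate cover $\bar X_s$ remembering $\lambda^2$. That is the picture when the spectral polynomial is even in $\lambda$ (the symplectic case). Here, however, $\tilde\sigma$ covers $\sigma$ rather than $\mathrm{id}_X$, so it does not give a fibrewise $\lambda\mapsto-\lambda$ symmetry of $\tilde X_s\to X$, and nothing in the paper's description of $W^{\sigma,-}$ furnishes one. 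Thus the block-system analysis, as written, is not justified in the $\sigma$-alternating setting; to close the gap you yourself flag at the end, you would need either to establish such a symmetry from the actual shape of the characteristic polynomial in this case, or to replace it with a monodromy/irreducibility argument adapted to the $\sigma$-invariant coefficients.
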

\begin{proof}
\begin{enumerate}
    \item Note that  for general $s\in W^{\sigma,-}$, the cover $\tilde{X}_s\ra X$ does not factorize through an \'etale cover. So the same is true for $\hat{X}_s\ra X$. Now apply \cite[Proposition $11.4.3$]{BL}.
    \item This can be seen locally using the equation of $\tilde X_s$.
    \item Since the nodes are exactly $\rm Fix(\tilde\sigma)$, and since the action of $\tilde\sigma$ interchanges  the two sheets around any node, we deduce that $\hat\sigma$ has no fixed point. 
\end{enumerate}	
\end{proof}
In particular we deduce that the variety $\hat{\rm Q}=\x{Nm}_{\hat{X}_s/X}^{-1}(\hat{\delta})$ is connected, where $\hat{\delta}=\x{det}(\hat{q}_*\al O_{\hat{X}_s})^{-1}$. 
 \\ Note also that we have  $$g_{\hat X_s}=4r^2(g_X-1)+1-2rn$$ and $$g_{\tilde Y_s}= 4r^2(g_Y-1)+nr(2r-1)+1.$$\\

\section{Main results}
Recall that we have constructed in \cite{Z} a dominant rational map $$\hat q_*:\hat{ \rm P}_0\dashrightarrow \al{U}_{X,0}^{\sigma,-}(2r),$$ where $\hat{\rm P}_0$ is a connected  component of $\hat{\rm P}$. Moreover, we have a dominant rational map $${\rm \hat P\cap \hat Q}\dashrightarrow \al{SU}_{X}^{\sigma,-}(2r).$$
Let $\rm{\hat S}$ be the connected component of $\rm\hat P\cap {\hat Q}$ that dominates $\al{SU}_{X,0}^{\sigma,-}(2r)$, where $0$ is the trivial type $(+1)_{p\in R}$.

We denote by  $\hat{\sr L}$ line bundles defining   principal polarization on $\rm{\hat{P}}_0$ such that $(\hat{q}_*)^*\sr P_L=\hat{\sr L}$. \\ 
We note that $\rm{\hat S}$ and $\rm P$ are complementary pair inside the principally polarized abelian variety $\rm \hat P_0$, that's we have an isogeny $${\rm\hat{S}\times P}\lra {\rm \hat{P}}_0,$$ given by $(L,M)\ra L\otimes q^*M$. Note that ${\rm\hat{S}\cap P}={\rm P}[r]$. The pullback of $\hat{\sr L}$ via this map is of the form $\hat{\sr M}\boxtimes \sr L$, where $\hat{\sr M}$ (resp. $\sr L$) is a polarization on $\rm \hat{S}$ (resp. $\rm P$).
\begin{prop} We have 
	$$H^0(\al{U}_{X,0}^{\sigma,-}(2r),\sr{P}_L)=1.$$
\end{prop}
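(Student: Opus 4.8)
The plan is to sandwich $h^0\!\left(\al U_{X,0}^{\sigma,-}(2r),\sr P_L\right)$ between $1$ and $1$: the lower bound comes from the effectivity statement of Lemma \ref{effective}, and the upper bound from pulling sections back along the dominant rational map $\hat q_*$ and using that $\hat{\sr L}$ is a principal polarization.

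For the lower bound I would appeal directly to Lemma \ref{effective}. The restriction of the Pfaffian divisor $\Xi_L$ to the component $\al U_{X,0}^{\sigma,-}(2r)$ is a genuine (proper) divisor, since $2\Xi_L\equiv\Theta_L$ restricts to a divisor there. Hence the tautological section of $\al O(\Xi_L)\cong\sr P_L$ does not vanish identically on this component, and so $h^0\!\left(\al U_{X,0}^{\sigma,-}(2r),\sr P_L\right)\geqslant 1$.

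For the upper bound I would pull back sections along $\hat q_*:\hat{\rm P}_0\dashrightarrow\al U_{X,0}^{\sigma,-}(2r)$. By construction $(\hat q_*)^*\sr P_L=\hat{\sr L}$, and the indeterminacy locus of $\hat q_*$ has codimension at least $2$ in the smooth variety $\hat{\rm P}_0$; therefore the pullback of a section, a priori defined only on the domain of $\hat q_*$, extends across the indeterminacy locus by Hartogs to give a linear map
$$(\hat q_*)^*:H^0\!\left(\al U_{X,0}^{\sigma,-}(2r),\sr P_L\right)\lra H^0\!\left(\hat{\rm P}_0,\hat{\sr L}\right).$$
This map is injective because $\hat q_*$ is dominant: its image is dense and hence meets the complement of the zero divisor of any nonzero section, so the pullback of a nonzero section is nonzero. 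Since $\hat{\sr L}$ defines a principal polarization on the abelian variety $\hat{\rm P}_0$ (the cover $\hat X_s\ra\tilde Y_s$ being unramified, its Prym is principally polarized), one has $h^0\!\left(\hat{\rm P}_0,\hat{\sr L}\right)=1$, whence $h^0\!\left(\al U_{X,0}^{\sigma,-}(2r),\sr P_L\right)\leqslant 1$. Combining the two bounds yields the asserted equality.

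The only point requiring care is the well-definedness of $(\hat q_*)^*$ across the indeterminacy locus; this is exactly where the codimension $\geqslant 2$ estimate for that locus, together with the smoothness (hence normality) of $\hat{\rm P}_0$, is used, precisely as in the analogous assertion for $\sr D$ recalled in the introduction. Once this extension is granted, the statement is a formal consequence of the defining property $h^0=1$ of a principal polarization.
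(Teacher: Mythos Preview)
Your proposal is correct and follows essentially the same approach as the paper: the lower bound from Lemma~\ref{effective} and the upper bound via the injection $(\hat q_*)^*$ into $H^0(\hat{\rm P}_0,\hat{\sr L})\cong\bb C$, using dominance and the codimension $\geqslant 2$ estimate for the indeterminacy locus. The paper records the codimension estimate as a separate lemma (proved by the same $\Theta_\kappa$ argument as in \cite{BNR}), but the overall structure is identical.
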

\begin{proof}
Let $\rm T\subset \hat P_0$ be the locus of line bundles $L$ such that $q_*L$ is semistable.  By Lemma (\ref{lemm1}) below, and  
 since $q_*$ is dominant, we get an injection given by pullback $$H^0(\al{U}_{X,0}^{\sigma,-}(2r),\sr P_L)\hookrightarrow H^0(\rm\hat P_0, \hat{\sr L}).$$ So the dimension of $ H^0(\al{U}_{X,0}^{\sigma,-}(2r),\sr P_L)$ is at most $1$.  The result now follows from Lemma \ref{effective}.
\end{proof}
\begin{lemm} \label{lemm1}
    The codimension of the complement of $\rm T$ is at least two.
\end{lemm}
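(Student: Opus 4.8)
The plan is to show that the locus $\rm T\subset \hat P_0$ where the pushforward $q_*L$ fails to be semistable (or where the rational map $\hat q_*$ is undefined) has codimension at least two, which by the standard Hartogs-type extension argument guarantees that sections of $\hat{\sr L}$ on the open locus $\rm T$ extend to all of $\rm \hat P_0$, giving the injection used in the proposition above. First I would unwind what it means for $q_*L$ to be non-semistable. A line bundle $L$ on $\hat X_s$ pushes forward to a rank $2r$ bundle on $X$; its instability is witnessed by the existence of a destabilizing subbundle, equivalently by the existence of a subcover or a subsheaf structure on $\hat X_s$ compatible with the spectral correspondence. The strategy is to parametrize the possible destabilizing data and bound the dimension of the family of line bundles $L$ admitting such data, showing this family sits in codimension $\geqslant 2$ inside the abelian variety $\rm \hat P_0$.

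The key steps, in order, would be as follows. First, identify the complement of $\rm T$ with the union of loci where a proper $\tilde\sigma$- (or $\hat\sigma$-)compatible subsheaf of $q_*L$ has slope $\geqslant 0$; since we work with degree-zero bundles this is the locus where some anti-invariant subbundle has nonnegative degree. Second, translate each such destabilizing configuration into spectral data on a partial normalization or a subcurve of $\hat X_s$, using the spectral correspondence established in the Hitchin-system section, so that the destabilizing locus becomes the image of a Norm-map fiber (a coset of a sub-abelian-variety) under the tensor/pushforward construction. Third, compute the dimension of each such image and compare it to $\dim \rm \hat P_0 = g_{\hat X_s}-1-r(g_X-1)$ (the degree of $M$ computed above, which records $\dim \rm \hat P$); the genericity of $s\in W^{\sigma,-}$, together with the irreducibility facts already proven — that $\tilde X_s\ra X$ does not factor through an étale cover, that $\tilde Y_s$ is smooth, and that $\hat X_s\ra\tilde Y_s$ is unramified — should force every such destabilizing family to drop dimension by at least two. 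I would lean on the connectedness of $\hat{\rm Q}$ and the injectivity of $\hat q^*$ on $\rm Pic^0$ to rule out the "codimension one" boundary cases.

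The hard part will be the dimension count for the destabilizing loci: one must enumerate the combinatorial types of destabilizing subbundles of $q_*L$ (indexed by the ranks and the way a subcurve of $\hat X_s$ meets the ramification $\hat S$), and for each type produce a clean upper bound on the dimension of the corresponding family of $L$. The subtlety is that a destabilizing subbundle of slope exactly $0$ would only give codimension one, so the argument must show such borderline sub-objects cannot occur for generic spectral data — this is precisely where the non-factorization through an étale cover and the irreducibility of the spectral curve are essential, since a slope-zero destabilizing piece would correspond to a splitting of the cover that generic $s$ excludes. I expect the proof to proceed by showing that any genuine destabilizing locus forces the line bundle $L$ to lie in a translate of a proper abelian subvariety coming from a nontrivial isogeny factor, and that the smallest such translate already has codimension $\geqslant 2$; the ramified, twisted nature of the cover (so that $\rm\hat P$ is only an anti-invariant Prym, not the full Jacobian) is what makes the two available "independent" directions collapse the destabilizing family by the required amount.
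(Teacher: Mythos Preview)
Your plan is not a proof but a research outline, and it takes a route far more laborious than the paper's. The paper's argument (following \cite[Proposition~5.1]{BNR}) is a two-line theta-divisor trick: fix a theta characteristic $\kappa$ on $X$ and let $\Theta_\kappa=\{L\in\hat{\rm P}_0 : h^0(L\otimes\hat q^*\kappa)\neq 0\}$. If $\hat q_*L$ is not semistable it has a subbundle $F$ with $\deg F>0$, whence $\deg(F\otimes\kappa)>\rk(F)(g_X-1)$ and $h^0(F\otimes\kappa)>0$ by Riemann--Roch; thus $T^c\subset\Theta_\kappa$. Strictness follows because $T$ is stable under tensoring by $\hat q^*J_X$ (as $\hat q_*(L\otimes\hat q^*M)\cong\hat q_*L\otimes M$ and tensoring by a degree-zero line bundle preserves semistability), so for any $L\notin\Theta_\kappa$ the positive-dimensional subvariety $\hat q^*J_X\otimes L\subset T$ must meet the ample divisor $\Theta_\kappa$. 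That is the entire argument: no enumeration of destabilising types, no subcover combinatorics, no slope-zero boundary analysis.

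Your approach has a genuine gap beyond its incompleteness: the spectral translation you propose in your second step is not valid as stated. An arbitrary destabilising subbundle $F\subset\hat q_*L$ does \emph{not} in general correspond to a line bundle on a subcurve or partial normalisation of $\hat X_s$; that dictionary applies only to Higgs-invariant subbundles, whereas semistability of the underlying vector bundle is tested against \emph{all} subbundles, and for an integral spectral curve there are no proper Higgs-invariant ones at all. Hence the parametrisation of $T^c$ by norm-map fibres over subcovers does not capture the locus you need to bound, and the dimension count you envision---already flagged by you as ``the hard part'' and never carried out---would constrain the wrong set even if completed. (A minor slip: you describe $T$ as the locus where $q_*L$ fails to be semistable; in the paper $T$ is the semistable locus and it is $T^c$ whose codimension is at issue.)
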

\begin{proof}
    We emphasize that the proof is similar to \cite[Proposition $5.1$]{BNR}.\\ Let $\kappa$ be a theta characteristic on $X$. Consider  $\Theta_\kappa\subset \rm \hat{P}_0$ be the locus of line bundles $L$ such that $h^0(L\otimes \hat{q}^*\kappa)\not=0$. 
    We have $T^c\subset  \Theta_\kappa$. Indeed, let $L\not\in \Theta_\kappa$, assume that $q_*L$ is not semistable. So $q_*L$ has a subbundle $F$ such that $\deg F>0$, this implies $\deg F\otimes \kappa>{\rk (F)}(g-1)$, in particular, by Riemann-Roch, $F\otimes \kappa$ has a nonzero global section, this contradicts the assumption $L\not\in \Theta_\kappa.$ Now it is sufficient to show that  the inclusion  $T^c\subset  \Theta_\kappa$ is strict.   For this let $L\not\in\Theta_\kappa$, the positive dimensional variety $q^*J_X\otimes L:=\{q^*M\otimes L|M\in J_X\}$, which is contained in $T$, intersect the ample divisor $\Theta_\kappa$. Hence the result. 
\end{proof}
By \cite[Proposition $4.3$]{Z3}, the restriction of the Pfaffian line bundle $\sr P_L$ to $\al{SU}_{X,0}^{\sigma,-}(2r)$ is independent of $L$, so we denote it by $\sr P$. \\
We denote by $\al H(\sr L^r)$ the kernel of the polarization $$\Lambda(\sr L^r):\rm P\lra{\rm P}^*,$$  where $\rm P^*$ is the dual abelian variety. The associated theta group, which is a central extension of $\al H(\sr L^r)$, is denoted $\al G(\sr L^r)$.\\
We use similar notations for the line bundle $\hat{\sr L}$ on $\rm \hat{P}$.
\begin{prop}
	The type of the line bundle  $\hat{\sr M}$ on $\rm\hat S$ is given by $$(1,\dots,1,\underbrace{r,\dots,r}_{n-1},\underbrace{2r,\dots,2r}_{g_Y}).$$
\end{prop}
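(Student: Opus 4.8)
The plan is to read the type of $\hat{\sr M}$ off two pieces of data: the length of the type, equal to $\dim{\rm\hat S}$, and the finite abelian group $\al H(\hat{\sr M})=\ker\Lambda(\hat{\sr M})$, whose invariant factors are precisely the entries of the type that exceed $1$. The length is immediate from the genus formulas already recorded, because the isogeny ${\rm\hat S}\times{\rm P}\ra{\rm\hat P}_0$ gives $\dim{\rm\hat S}=\dim{\rm\hat P}_0-\dim{\rm P}$, with $\dim{\rm\hat P}_0=g_{\hat X_s}-g_{\tilde Y_s}$ and $\dim{\rm P}=g_Y+n-1$ both explicit; so once the invariant factors $>1$ are identified, the number of entries equal to $1$ is determined. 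The whole content is thus the computation of $\al H(\hat{\sr M})$.

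First I would exploit that ${\rm\hat S}$ and ${\rm P}$ are complementary abelian subvarieties of the principally polarized $({\rm\hat P}_0,\hat{\sr L})$. By the theory of complementary subvarieties in a principally polarized abelian variety \cite{BL}, the restricted polarizations $\hat{\sr M}=\hat{\sr L}|_{\rm\hat S}$ and $\sr L=\hat{\sr L}|_{\rm P}$ have isomorphic kernels $\al H(\hat{\sr M})\cong\al H(\sr L)$, hence the same invariant factors $>1$. Since $\dim{\rm P}=(n-1)+g_Y$, the type of $\sr L$ has no room for trivial entries, so it suffices to compute the type of $\sr L$ and then reattach the trivial factors to $\hat{\sr M}$ using the dimension count above.

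To compute the type of $\sr L=\hat{\sr L}|_{\rm P}$ I would relate it to the canonical polarization $\Theta_{\rm P}$ carried by ${\rm P}$ as the Prym variety of $\pi\colon X\ra Y$, showing $\sr L\cong\Theta_{\rm P}^{\otimes r}$. The type of $\Theta_{\rm P}$ itself I would recover by the same complementary-subvariety mechanism applied to $\pi^*J_Y$ and ${\rm P}$ inside $J_X$: since ${\rm Nm}$ is adjoint to $\pi^*$ for the principal polarizations, $\Lambda\big(({\pi^*})^*\Theta_{J_X}\big)=\widehat{\pi^*}\circ\Lambda(\Theta_{J_X})\circ\pi^*=\Lambda(\Theta_{J_Y})\circ{\rm Nm}\circ\pi^*=2\,\Lambda(\Theta_{J_Y})$, whence $({\pi^*})^*\Theta_{J_X}\equiv 2\Theta_{J_Y}$ and $\al H(\Theta_{J_X}|_{\pi^*J_Y})\cong(\bb Z/2)^{2g_Y}$; by complementarity $\al H(\Theta_{\rm P})\cong(\bb Z/2)^{2g_Y}$, and with $\dim{\rm P}=g_Y+n-1$ this gives $\Theta_{\rm P}$ of type $(1,\dots,1,2,\dots,2)$ with $n-1$ ones and $g_Y$ twos. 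Raising to the $r$-th power multiplies each entry by $r$, so $\sr L$ has type $(\underbrace{r,\dots,r}_{n-1},\underbrace{2r,\dots,2r}_{g_Y})$, i.e. $\al H(\sr L)\cong(\bb Z/r)^{2(n-1)}\oplus(\bb Z/2r)^{2g_Y}$. Transporting these invariant factors to $\hat{\sr M}$ and padding with ones up to $\dim{\rm\hat S}$ yields the claimed type.

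The main obstacle is the identification $\sr L\cong\Theta_{\rm P}^{\otimes r}$, that is, controlling the restriction of the principal polarization of the large Prym ${\rm\hat P}_0\subset{\rm Pic}(\hat X_s)$ to the image $\hat q^*{\rm P}$ of the small Prym. The expected multiple is $r$ rather than the full degree $2r$ of $\hat q\colon\hat X_s\ra X$, the halving reflecting that ${\rm\hat P}_0$ and ${\rm P}$ are the anti-invariant parts for $\hat\sigma$ and $\sigma$ with $\hat q\,\hat\sigma=\sigma\,\hat q$; pinning this constant down requires computing $\widehat{\hat q^*}\circ\Lambda(\hat{\sr L})\circ\hat q^*$ through the tower $\hat X_s\ra X\ra Y$ and $\hat X_s\ra\tilde Y_s\ra Y$. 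This is where the relation ${\rm\hat S}\cap{\rm P}={\rm P}[r]$ enters, fixing the $r$-torsion directions of the kernel, while the extra $2$-torsion on the $g_Y$ distinguished directions comes from the type of $\Theta_{\rm P}$; showing that these combine into factors of order exactly $2r$ rather than splitting as $\bb Z/r\oplus\bb Z/2$ is the delicate, computational heart of the argument, the remaining steps being formal consequences of the complementary-subvariety formalism.
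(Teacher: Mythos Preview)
Your approach is essentially the paper's: compute the type of the restricted polarization on $\rm P$, then invoke \cite[Corollary~12.1.5]{BL} for complementary abelian subvarieties in a principally polarized abelian variety to read off the type of $\hat{\sr M}$. The paper simply \emph{states} that the canonical Prym polarization $\sr L=\Theta_{\rm P}$ has type $(1,\dots,1,\underbrace{2,\dots,2}_{g_Y})$, multiplies by $r$, and cites \cite{BL}; you supply the justification for the type of $\Theta_{\rm P}$ (via complementarity of $\pi^*J_Y$ and $\rm P$ in $J_X$) and correctly isolate the identification $\hat{\sr L}|_{\rm P}\cong\Theta_{\rm P}^{\,r}$ as the one step with content. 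Your explanation of the factor $r$ (rather than $\deg\hat q=2r$) is right: $(\hat q^*)^*\Theta_{\hat X_s}\equiv 2r\,\Theta_X$ on $J_X$, while $\Theta_{\hat X_s}|_{\hat{\rm P}_0}=2\hat{\sr L}$ because $\hat X_s\to\tilde Y_s$ is \'etale, and the two $2$'s cancel.

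One correction: your final paragraph overstates the difficulty. Once $\hat{\sr L}|_{\rm P}\cong\Theta_{\rm P}^{\,r}$ is established, there is no ``delicate, computational heart'' left. The type of the $r$-th power of a polarization of type $(d_1,\dots,d_g)$ is simply $(rd_1,\dots,rd_g)$, since $\Lambda(\sr L^r)=r\,\Lambda(\sr L)$; so $(1,\dots,1,2,\dots,2)$ becomes $(r,\dots,r,2r,\dots,2r)$ automatically, with no ambiguity between $\bb Z/2r$ and $\bb Z/r\oplus\bb Z/2$. The worry you raise would only arise if you tried to compute $\al H(\hat{\sr M})$ \emph{directly} from the intersection $\hat{\rm S}\cap\hat q^*{\rm P}$ and its group structure, bypassing the identification with $\Theta_{\rm P}^{\,r}$; but that is not the route you (or the paper) take. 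Incidentally, note that this intersection is $\sqrt[r]{J_Y[2]}$ (Proposition~\ref{Hgroup}), of order $2^{2g_Y}r^{2(g_Y+n-1)}$, which matches $|\al H(\hat{\sr M})|$ for the claimed type---so the statement ${\rm\hat S}\cap{\rm P}={\rm P}[r]$ quoted in the paper cannot be literally correct, and you should not lean on it.
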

\begin{proof} The type of the polarization $\sr L$ on $\rm P$ is given by $$(1,\dots,1,\underbrace{2,\dots,2}_{g_Y}).$$ Hence the type of $\sr L^{r}$ is $(r,\dots,r,2r,\dots,2r).$ Note that $\rm{\hat S}$ and $\rm P$ are complementary pair inside the principally polarized abelian variety $\rm \hat P_0$ such that $\dim \rm \hat{S}>\dim\rm P$, so, we use \cite[Corollary $12.1.5$]{BL} to get the result.
\end{proof}
 \begin{prop}\label{Hgroup}
	The group $ {\rm P}[2r]$ of $2r-$torsion points of the Prym variety $\rm  P$ acts transitively in a natural way on the set of connected components $\pi_0(\rm{\hat P}_0\cap\rm{\hat Q})$. Moreover, the stabilizer of the identity component is given by $$ \x{Stab}({\rm \hat S})=\sqrt[r]{J_Y[2]}:=\{L\in{\rm P}[2r]\,|\, L^r\in \pi^*J_Y[2]\}.$$ In particular, the subgroup ${\rm P}[r]\subset{\rm \hat{P}}[2r]$ is included in $\x{Stab}(\hat{\rm S})$. Moreover,  we have  $$\al H(\sr{L}^r)=\x{Stab}(\hat{\rm{S}}). $$ 
\end{prop}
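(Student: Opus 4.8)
The plan is to realise the action of $\rm P[2r]$ as translation through $\hat q^{*}$ and to reduce the whole statement to the theory of complementary abelian subvarieties inside the principally polarised abelian variety $\hat{\rm P}_0$, together with the classical computation of the kernel of the Prym polarisation. First I would set up the action: for $M\in\rm P[2r]$ put $t_M\colon \hat{\rm P}_0\ra\hat{\rm P}_0$, $x\mapsto x\otimes\hat q^{*}M$. Since $M\in\rm P$ satisfies $\sigma^{*}M\cong M^{-1}$ and $\hat q\circ\hat\sigma=\sigma\circ\hat q$, one gets $\hat\sigma^{*}(x\otimes\hat q^{*}M)\cong (x\otimes \hat q^{*}M)^{-1}(\hat S)$, so $t_M$ preserves $\hat{\rm P}$; and because $\deg\hat q=2r$ we have $\x{Nm}_{\hat X_s/X}(\hat q^{*}M)=M^{\otimes 2r}=\al O_X$, so $t_M$ preserves $\hat{\rm Q}$. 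Hence $\rm P[2r]$ permutes $\pi_0(\hat{\rm P}_0\cap\hat{\rm Q})$. For transitivity I fix the origin of $\hat{\rm P}_0$ inside $\hat{\rm S}$, so that the norm becomes a homomorphism $\nu=\x{Nm}_{\hat X_s/X}\colon\hat{\rm P}_0\ra J_X$ with $\nu|_{\hat{\rm S}}=0$ and $\nu(\hat q^{*}M)=M^{\otimes 2r}$, and $\hat{\rm P}_0\cap\hat{\rm Q}=\ker\nu$ with identity component $\hat{\rm S}$ (the dimension count $\dim\ker\nu=\dim\hat{\rm P}_0-\dim\rm P=\dim\hat{\rm S}$ forces this). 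Using the isogeny $\phi\colon \hat{\rm S}\times\rm P\ra\hat{\rm P}_0$, $(L,M)\mapsto L\otimes\hat q^{*}M$, one has $\nu(\phi(L,M))=M^{\otimes 2r}$, hence $\phi^{-1}(\ker\nu)=\hat{\rm S}\times\rm P[2r]$; as $\phi$ is surjective every component of $\hat{\rm P}_0\cap\hat{\rm Q}$ is a full-dimensional translate $\hat{\rm S}\otimes\hat q^{*}M=t_M(\hat{\rm S})$ with $M\in\rm P[2r]$, so the orbit of $\hat{\rm S}$ is all of $\pi_0(\hat{\rm P}_0\cap\hat{\rm Q})$.

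For the stabiliser, from $t_M(\hat{\rm S})=\hat{\rm S}\otimes\hat q^{*}M$ I read off
$$\x{Stab}(\hat{\rm S})=\{M\in\rm P[2r]\ :\ \hat q^{*}M\in\hat{\rm S}\}=\{M\in\rm P[2r]\ :\ \hat q^{*}M\in\hat{\rm S}\cap B\},$$
where $B=\hat q^{*}\rm P$, the last equality holding because $\hat q^{*}M\in B$ always. Now $\hat{\rm S}$ and $B$ are complementary abelian subvarieties of the ppav $(\hat{\rm P}_0,\hat{\sr L})$, and by the preceding Proposition the restriction of $\hat{\sr L}$ to $B$ is $\hat q^{*}\sr L^{r}$. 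By the theory of complementary abelian subvarieties (\cite[Ch.~12]{BL}) the finite group $\hat{\rm S}\cap B$ equals the kernel $K(\hat{\sr L}|_{B})$ of the restricted polarisation, i.e. $\hat{\rm S}\cap B=\hat q^{*}\al H(\sr L^{r})$. Since $\hat q^{*}$ is injective and $\al H(\sr L^{r})\subset\rm P[2r]$ (the exponent of $\sr L^{r}$ being $2r$), I conclude $\x{Stab}(\hat{\rm S})=\al H(\sr L^{r})$.

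It remains to identify $\al H(\sr L^{r})$ with $\sqrt[r]{J_Y[2]}$. Here $\sr L$ is the restriction to $\rm P$ of a principal polarisation of $J_X$, and $\pi^{*}J_Y$ and $\rm P$ are complementary in $J_X$; since for a ramified cover $\pi^{*}J_Y\cap\rm P=\pi^{*}(J_Y[2])$, the same complementary-subvariety identity gives $K(\sr L)=\pi^{*}J_Y[2]$. As $\phi_{\sr L^{r}}=r\,\phi_{\sr L}$, an element $x\in\rm P$ lies in $\al H(\sr L^{r})=\ker\phi_{\sr L^{r}}$ iff $rx\in K(\sr L)=\pi^{*}J_Y[2]$, and such an $x$ automatically satisfies $2rx=0$; therefore
$$\al H(\sr L^{r})=\{x\in\rm P[2r]\ :\ x^{r}\in\pi^{*}J_Y[2]\}=\sqrt[r]{J_Y[2]}.$$
In particular $\rm P[r]\subset\al H(\sr L^{r})$, since $x\in\rm P[r]$ gives $x^{r}=\al O_X\in\pi^{*}J_Y[2]$. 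Combining the three steps gives $\x{Stab}(\hat{\rm S})=\sqrt[r]{J_Y[2]}=\al H(\sr L^{r})$, as asserted.

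The only genuinely delicate point is the identification of the finite group $\hat{\rm S}\cap B$ as the \emph{exact} subgroup $\hat q^{*}\al H(\sr L^{r})$, rather than merely a group of the same order: this uses both that $\hat{\sr L}|_{B}=\hat q^{*}\sr L^{r}$ (which is exactly what the computation of the type of $\hat{\sr M}$ encodes) and the complementary-subvariety identity $\hat{\rm S}\cap B=K(\hat{\sr L}|_{B})$, together with the classical fact $K(\sr L)=\pi^{*}J_Y[2]$. Once these are in place, the transitivity and the orbit–stabiliser bookkeeping are formal, and a consistency check is available: the orbit has size $|\rm P[2r]|/|\sqrt[r]{J_Y[2]}|=2^{2(n-1)}$, matching the number of components of $\al{SU}_{X}^{\sigma,-}(2r)$ lying in $\al U_{X,0}^{\sigma,-}(2r)$.
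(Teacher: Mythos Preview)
Your argument is correct and, in contrast to the paper, actually gives a proof: the paper's own ``proof'' of this proposition is a bare citation to \cite[Proposition~4.27 and Theorem~4.28]{Z}, so there is no in-paper argument to compare against. What you supply is a clean self-contained derivation via the theory of complementary abelian subvarieties in a principally polarised abelian variety.

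A couple of small remarks. First, when you invoke ``the preceding Proposition'' for $\hat{\sr L}|_B\cong\hat q^{*}\sr L^{r}$, that proposition only records the \emph{type} of $\hat{\sr M}$; the identification you need (as polarisations, which is all that matters for computing $K(\cdot)$) is more directly seen from $2\hat{\sr L}\equiv\Theta_{\hat X_s}|_{\hat{\rm P}_0}$ together with $(\hat q^{*})^{*}\Theta_{\hat X_s}\equiv 2r\,\Theta_X$ on $J_X$, whence $(\hat q^{*})^{*}\hat{\sr L}\equiv r\,\Theta_X|_{\rm P}=r\sr L$ in $\mathrm{NS}({\rm P})$. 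Second, your key identity $\hat{\rm S}\cap B=K(\hat{\sr L}|_B)$ is exactly the general fact that for complementary abelian subvarieties $B,C$ of a ppav $(A,L)$ one has $\ker(\hat\iota_B\circ\phi_L)=C$ (this kernel is connected because $\ker\hat\iota_B=(A/B)^{\vee}$ is connected), hence $K(L|_B)=B\cap C$; it is worth stating this explicitly, since the paper's parenthetical ``$\hat{\rm S}\cap{\rm P}={\rm P}[r]$'' earlier in the section is inconsistent with the proposition and with your (correct) cardinality count $|\hat{\rm S}\cap B|=|\al H(\sr L^{r})|=2^{2g_Y}r^{2(g_Y+n-1)}$. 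With these two clarifications your proof goes through without change, and your orbit--stabiliser check $|{\rm P}[2r]|/|\sqrt[r]{J_Y[2]}|=2^{2(n-1)}$ indeed matches the expected number of components.
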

\begin{proof}
\cite[Proposition $4.27$ and Theorem $4.28$]{Z}.
\end{proof}
\begin{theo}\label{main}
 There exists a canonical isomorphism $$  H^0(\al{SU}_{X,0}^{\sigma,-}(2r),\sr P)\cong H^0({\rm P},\sr{L}^{r})^*.$$ In particular we deduce $$\dim(H^0(\al{SU}_{X,0}^{\sigma,-}(2r),\sr P))=2^{g_Y}r^{g_Y+n-1}.$$ 
\end{theo}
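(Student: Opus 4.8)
The plan is to establish the isomorphism $H^0(\al{SU}_{X,0}^{\sigma,-}(2r),\sr P)\cong H^0({\rm P},\sr L^r)^*$ by pulling back global sections along the dominant map $\hat q_*:\rm\hat S\dashrightarrow \al{SU}_{X,0}^{\sigma,-}(2r)$ and identifying the image with a distinguished subspace of $H^0(\rm\hat S,\hat{\sr M})$ that is canonically dual to $H^0(\rm P,\sr L^r)$. First I would argue, exactly as in the proof of the previous proposition via Lemma \ref{lemm1}, that the locus where $\hat q_*$ is undefined (or where the pushforward fails to be semistable) has codimension at least two inside $\rm\hat S$, so that pullback gives an injection $(\hat q_*)^*:H^0(\al{SU}_{X,0}^{\sigma,-}(2r),\sr P)\hookrightarrow H^0(\rm\hat S,\hat{\sr M})$, using the identification $(\hat q_*)^*\sr P=\hat{\sr M}$ coming from the relation $(\hat q_*)^*\sr P_L=\hat{\sr L}$ restricted to $\rm\hat S$.

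The crux is a theta-group argument based on the complementary pair ${\rm\hat S\times P}\to\rm\hat P_0$. The polarization $\hat{\sr L}$ on the principally polarized $\rm\hat P_0$ decomposes as $\hat{\sr M}\boxtimes\sr L$ under this isogeny, and by Proposition \ref{Hgroup} we have the crucial identity $\al H(\sr L^r)=\x{Stab}(\rm\hat S)$, with ${\rm P}[r]\subset\x{Stab}(\rm\hat S)$. I would use this to set up a Heisenberg/theta-group action: the theta group $\al G(\sr L^r)$ acts on $H^0(\rm P,\sr L^r)$ as its unique irreducible representation of weight one, and the complementary structure lets one transport this action so that $H^0(\rm\hat S,\hat{\sr M})$ carries a compatible $\al G(\sr L^r)$-action. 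The key step is to show that the image of $(\hat q_*)^*$ is precisely the subspace of $H^0(\rm\hat S,\hat{\sr M})$ that is invariant (or transforms by a fixed character) under the subgroup $\rm P[r]=\rm\hat S\cap\rm P$, equivalently the sections descending appropriately under the isogeny. Decomposing $H^0(\rm\hat S,\hat{\sr M})$ into $\rm P[r]$-isotypic pieces via the Heisenberg representation, and matching dimensions using the type computation $(1,\dots,1,r,\dots,r,2r,\dots,2r)$ of $\hat{\sr M}$ against the type $(r,\dots,r,2r,\dots,2r)$ of $\sr L^r$, should identify the invariant piece with a space of the right dimension $2^{g_Y}r^{g_Y+n-1}=\dim H^0(\rm P,\sr L^r)$.

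To make the duality canonical rather than a mere dimension count, I would realize $H^0(\al{SU}_{X,0}^{\sigma,-}(2r),\sr P)$ as pairing against $H^0(\rm P,\sr L^r)$ through the Pfaffian divisor $\Xi_L$ constructed in Lemma \ref{effective}: a point $M\in\rm P$ produces, via translation of the pushforward construction, a section of $\sr L^r$ on one side and the restriction of $\Xi_L$ on the other, and the incidence relation $\{(E,M):h^0(E\otimes L\otimes M)\neq 0\}$ furnishes a bilinear pairing whose nondegeneracy is what must be proved. The Prym-Jacobian geometry of $\rm\hat X_s$, together with the isogeny $\rm\hat S\times P\to\hat P_0$ and the identification of $\x{Stab}(\rm\hat S)$ with $\al H(\sr L^r)$, should force this pairing to be perfect.

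The main obstacle I anticipate is the theta-group compatibility across the isogeny: one must check that the $\rm P[r]$-action arising from the complementary-pair structure on $\rm\hat P_0$ matches the natural $\al H(\sr L^r)$-action on $H^0(\rm P,\sr L^r)$ under the Heisenberg equivalence, and that the image of $(\hat q_*)^*$ lands exactly in the predicted isotypic component rather than merely inside $H^0(\rm\hat S,\hat{\sr M})$. Controlling the cocycle/character by which sections transform, and ruling out that the pullback could hit a smaller or differently-twisted subspace, is the delicate point; the type computation and the equality $\al H(\sr L^r)=\x{Stab}(\rm\hat S)$ are precisely the inputs designed to pin this down, but verifying that they assemble into a canonical isomorphism (and not just an abstract one) will require care with the Mumford-style descent of the Heisenberg representation.
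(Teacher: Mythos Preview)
Your setup is right: the codimension-two argument gives the injection $(\hat q_*)^*:H^0(\al{SU}_{X,0}^{\sigma,-}(2r),\sr P)\hookrightarrow H^0(\hat{\rm S},\hat{\sr M})$, and the complementary pair $\hat{\rm S},{\rm P}\subset\hat{\rm P}_0$ is the correct structure. But you have overcomplicated the theta-group step and thereby missed the decisive point. You propose to decompose $H^0(\hat{\rm S},\hat{\sr M})$ into ${\rm P}[r]$-isotypic pieces and identify the image of $(\hat q_*)^*$ with one of them. Compute the dimensions from the types you quote: $\dim H^0(\hat{\rm S},\hat{\sr M})=r^{n-1}(2r)^{g_Y}=2^{g_Y}r^{g_Y+n-1}=\dim H^0({\rm P},\sr L^r)$ already. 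There is no proper subspace to isolate. The clean argument, which is what the paper does, is that $\al H(\hat{\sr M})\cong\al H(\sr L^r)$ acts on $\al{SU}_{X,0}^{\sigma,-}(2r)$ by tensoring, so $(\hat q_*)^*\sr P=\hat{\sr M}$ forces $\al G(\sr P)\cong\al G(\hat{\sr M})$ and the injection $(\hat q_*)^*$ is $\al G(\hat{\sr M})$-equivariant. Since $H^0(\hat{\rm S},\hat{\sr M})$ is the \emph{irreducible} Heisenberg representation of $\al G(\hat{\sr M})$ (Mumford), any nonzero equivariant map into it is surjective. Your ``main obstacle''---which isotypic component the image hits---dissolves once you invoke irreducibility; there are no nontrivial $\al G(\hat{\sr M})$-stable subspaces.

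For the canonical duality you propose an explicit incidence pairing via $\Xi_L$ and then argue nondegeneracy. The paper instead applies directly the complementary-pair duality $H^0(\hat{\rm S},\hat{\sr M})\cong H^0({\rm P},\sr L^r)^*$ of \cite[Proposition~2.4]{BNR}, which is canonical and immediate once $(\hat q_*)^*$ is known to be an isomorphism. Your route is morally the same pairing, but would require reproving exactly the nondegeneracy that the BNR result hands you for free.
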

\begin{proof}
 Consider the following commutative diagram $$\xymatrix{\hat{\x{S}}\times \x P\ar[rr]\ar[d] & & \hat{\x P}_0\ar[d]\\ \al{SU}_{X,0}^{\sigma,-}(2r)\times {\rm P}\ar[rr] &&  \al U_{X,0}^{\sigma,-}(2r), }$$
		Using  \cite[Theorem $3$]{BNR}, we deduce that  the pullback of the line bundle $\sr P_L$ to $\al{SU}_{X,0}^{\sigma,-}(2r)\times \rm P$ is of the form  $p_1^*\sr P\otimes p_2^*\sr L^{r}$.\\ 
	Now the rational map $\hat{q}_*:\hat{\x{S}}\lra \al{SU}_{X,0}^{\sigma,-}(2r)$ is dominant by \cite[Theorem $4.16$]{Z}. The locus in $\rm \hat S$ of line bundles $L$ such that  $\hat{q}_*L$ is not stable has codimension at least two, it follows that the map $$(\hat q_*)^*:H^0(\al{SU}_{X,0}^{\sigma,-}(2r), \sr P)\rightarrow H^0(\hat{\x{S}}, \hat{\sr M})$$ is injective,
	Moreover the group $\al H(\sr L^r)=\sqrt[r]{J_Y[2]}$ acts naturally on  $\al{SU}_{X,0}^{\sigma,-}(2r)$, and the theta group  $\al G(\sr P)$ of automorphisms of $\sr P$  acts  on  the  space $ H^0(\al{SU}_{X,0}^{\sigma,-}(2r),\sr P)$. Since $(\hat{q}_*)^*\sr P=\hat{\sr M}$, we deduce that $\al G(\sr P)\cong\al G(\hat{\sr M})$. This is because  we have $$1\ra \bb C^*\ra \al G(\hat{\sr M})\ra \al H(\hat{\sr M})\ra 1, $$ and  $$\al H(\hat{\sr M})\cong \al H(\sr L^r).$$ Since the  space $H^0(\hat{\rm S},\hat{\sr M})$ is an irreducible $\al G(\hat{\sr M})-$representation (\cite{M3}), the  map $$H^0(\al{SU}_{X,0}^{\sigma,-}(2r),\sr P)\hookrightarrow H^0(\hat{\rm{S}},\hat{\sr{M}}),$$ which is equivariant for these actions, is necessarily an isomorphism.\\
	Now, the two abelian subvarieties $\rm P$ and $\hat{\rm S}$ are  complementary pair inside $\hat{\rm{P}}_0$, we obtain  using \cite[Proposition $2.4$]{BNR}  an isomorphism $$H^0(\hat{\rm{S}}, \hat{\sr M})\cong H^0({\rm P},\sr{L}^{r})^*.$$
	Hence we deduce an isomorphism  $$H^0(\al{SU}_{X,0}^{\sigma,-}(2r),\sr P)\cong H^0({\rm P},\sr L^{r})^*.$$
\end{proof}

\bibliographystyle{acm}
\bibliography{bib}
\end{document}